
\documentclass[11pt]{amsart}
\usepackage{amsmath, amsthm, amssymb}
\usepackage{graphicx}
\usepackage{mathbbol}
\usepackage{txfonts}
\usepackage[usenames]{color}

\title[Almost Rigidity of the AH Positive Mass Theorem]{Almost Rigidity of the Positive Mass Theorem \\
for Asymptotically Hyperbolic Manifolds \\ with Spherical Symmetry}

\author[Sakovich]{Anna Sakovich}
\address{Anna Sakovich: Department of Mathematics, Uppsala University}
\email{anna.sakovich@math.uu.se}

\author[Sormani]{Christina Sormani}
\address{Christina Sormani: Department of Mathematics, Lehman College and CUNY Graduate Center}
\email{sormanic@gmail.com}

\thanks{The authors began this research while in residence at the Mathematical Sciences Research Institute (MSRI) funded by NSF Grant No. 0932078000. The research was also funded in part by Sormani's NSF grant DMS-1612409.
}

\theoremstyle{plain}

\theoremstyle{definition}

\newtheorem{thm}{Theorem}[section]
\newcommand{\bt}{\begin{thm}}
\newcommand{\et}{\end{thm}}

\newtheorem{cor}[thm]{Corollary}   

\newcommand{\bc}{\begin{cor}}
\newcommand{\ec}{\end{cor}}

\newtheorem{lem}[thm]{Lemma}   

\newcommand{\bl}{\begin{lem}}
\newcommand{\el}{\end{lem}}

\newtheorem{prop}[thm]{Proposition}
\newcommand{\bp}{\begin{prop}}
\newcommand{\ep}{\end{prop}}

\newtheorem{defn}[thm]{Definition}
\newtheorem{conj}[thm]{Conjecture}

\newcommand{\bd}{\begin{defn}}    
\newcommand{\ed}{\end{defn}}

\newtheorem{rmrk}[thm]{Remark}   

\newcommand{\br}{\begin{rmrk}}
\newcommand{\er}{\end{rmrk}}

\newcommand{\grad}{\nabla}

\newcommand{\be}{\begin{equation}}

\newcommand{\ee}{\end{equation}}

\newcommand{\N}{\mathbb{N}}

\newcommand{\R}{\mathbb{R}}

\newcommand{\diam}{\operatorname{diam}}





\newcommand{\vol}{\operatorname{Vol}}







\DeclareMathOperator{\RS}{SphSym}

\DeclareMathOperator{\arcsinh}{arcsinh}


\begin{document}

\maketitle

\begin{abstract}
We use the notion of intrinsic flat distance to address the 
almost rigidity of the positive mass theorem for asymptotically hyperbolic manifolds. In particular, we prove that a sequence of spherically symmetric asymptotically hyperbolic manifolds satisfying the conditions of the positive mass theorem converges to hyperbolic space in the intrinsic flat sense, if the limit of the mass along the sequence is zero.
\end{abstract}

\section{{\bf{Introduction}}}\label{sec:intro}

Broadly speaking, an asymptotically hyperbolic manifold 
is a non-compact Riemannian manifold having an end such 
that its  geometry approaches that of hyperbolic space.  Such 
manifolds naturally arise when studying isolated systems in
General Relativity.  When one assumes the dominant energy
condition they are endowed with
scalar curvature greater than or equal to $-m(m-1)$.  

The notion of mass 
for this class of manifolds was first defined mathematically by Wang in \cite{Wang} and Chru\'sciel-Herzlich in \cite{Chrusciel-Herzlich}.   Earlier
exploration of a possible notion of mass was completed by physicists Abbott-Deser, Ashtekar-Magnon and Gibbons-Hawking-Horowitz-Perry \cite{Abbott-Deser}\cite{Ashtekar-Magnon}\cite{Gibbons-Hawking-Horowitz-Perry}.  For the family of anti-de Sitter-Schwarzschild metrics (also called Kottler metrics) the mass coincides with the mass parameter. 
This notion of mass is similar to the 
concept of ADM mass of asymptotically Euclidean 
manifolds in the sense that it is a coordinate invariant which 
is computed in a fixed asymptotically hyperbolic end and which measures the leading order deviation of the geometry from the background hyperbolic metric in the end. 

Recall that the Euclidean Positive Mass Theorem, proven by Schoen and Yau \cite{Schoen-Yau-PMT} \cite{Schoen-Yau-NEW}, states that asymptotically Euclidean manifolds with nonnegative scalar curvature have nonnegative mass and the mass is $0$ iff the manifold is isometric to Euclidean space. The Hyperbolic Positive Mass Theorem states that an asymptotically hyperbolic manifold with scalar curvature $R \ge -m(m-1)$ has nonnegative mass and the mass is $0$ iff the manifold is isometric to hyperbolic space.  Wang and Chru\'sciel-Herzlich prove this theorem in the Spin setting following techniques similar to Witten's proof of the Euclidean Positive Mass Theorem \cite{Chrusciel-Herzlich}\cite{Wang}\cite{Witten-positive-mass}.
Andersson-Cai-Galloway prove it without spin assumption but with assumptions on the dimension and on the geometry at infinity \cite{Andersson-Cai-Galloway}.  
The first author has proven the Hyperbolic Positive Mass Theorem for general asymptotics in dimensions $m=3$ using an adaptation of Schoen and Yau's Jang equation reduction technique \cite{Schoen-Yau-PMT-2}.  With the rigidity statement of the positive mass theorem in mind, it is natural to ask
the following question concerning the almost rigidity of the statement.

{\em{ What happens if the mass of an asymptotically hyperbolic manifold is close to zero and the scalar curvature is at least $-m(m-1)$. Must the manifold then be close to hyperbolic space in some appropriate sense? 
}}

The same question of stability has been addressed in relation to the rigidity part of the Euclidean Positive Mass Theorem: {\em an asymptotically Euclidean manifold with small mass and non-negative scalar curvature should be close to Euclidean space in some appropriate sense}.  Asymptotically Euclidean spin manifolds with small mass have been studied by Finster with Bray and Kath, see \cite{Bray-Finster}\cite{Finster-Kath}\cite{Finster}. From estimates on the spinor field in Witten's argument they find that the $L^2$-norm of the curvature tensor (over the manifold minus an exceptional set) is bounded in terms of the mass. Lee \cite{Lee-near-equality} studies asymptotically Euclidean manifolds which are conformally flat outside a compact set $K$. For such manifolds he proves that the conformal factor can be controlled by the mass, so that the conformal factor tends uniformly to one outside any ball containing $K$ as the mass tends to zero. The argument by Lee relies on the variational argument used by Schoen and Yau in \cite{Schoen-Yau-PMT} to prove the rigidity part of the positive mass theorem for asymptotically Euclidean manifolds, and does not require the manifold to be spin. 

Asymptotically hyperbolic manifolds have rich geometry at infinity which often prevents the application of techniques designed to deal with asymptotically Euclidean manifolds. For instance, it is not clear whether an argument similar to that used by Bray, Finster and Kath 
in \cite{Bray-Finster}\cite{Finster-Kath}\cite{Finster} 
could work to prove stability of the positive mass theorem for asymptotically hyperbolic manifolds. The difficulty comes from the fact that the spinor field in the proof of positive mass theorem in the work of Chru\'sciel-Herzlich and Wang has more complicated behavior near infinity than the spinor field in Witten's original argument \cite{Chrusciel-Herzlich}\cite{Wang}\cite{Witten-positive-mass}.
Still, the argument of Lee in \cite{Lee-near-equality} can be adapted to the asymptotically hyperbolic setting, although this adaptation is not straightforward, see the
work of Dahl, Gicquaud and the first author in \cite{Dahl-Gicquaud-Sakovich} for further details. Using this result, one may draw conclusions on convergence to hyperbolic space outside some compact set. 

The current paper focuses on the much harder problem of understanding what happens inside the compact set.   When studying the almost rigidity of a
positive mass theorem in either setting one serious concern is that even if the mass is small, there still can be arbitrarily deep ``gravity wells'' and "bubbles".   One avoids bubbles by restricting to the class of manifolds with no closed interior minimal surfaces.  However, even in the spherically symmetric setting, Lee and the second author have constructed sequences of manifolds which develop increasingly deep gravity wells which do not converge in the smooth, Lipschitz or Gromov-Hausdorff sense to Euclidean space \cite{Lee-Sormani}.  However they prove intrinsic flat convergence of the sequences.  The intrinsic flat distance between Riemannian manifolds measures the filling volume between the spaces and will be described within.  It was first introduced by the second author and Wenger in \cite{SorWen2} applying sophisticated ideas of Ambrosio-Kirchheim \cite{AK} extending earlier work of Federer-Fleming \cite{FF} and Whitney \cite{Whitney}.

Lee and the second author proposed the Intrinsic Flat Almost Rigidity of the Euclidean Positive Mass Theorem Conjecture:  {\em a sequence of asymptotically flat manifolds with nonnegative scalar curvature and
no closed interior minimal surfaces whose ADM mass converges to $0$
converges in the pointed intrinsic flat sense to Euclidean space.}   
They stated and then proved this conjecture in the spherically symmetric setting \cite{Lee-Sormani}. It was proven in the graph setting by Huang, Lee, and the second author in \cite{Huang-Lee-Sormani} applying prior work of Huang-Lee \cite{Huang-Lee}.  It was proven in the geometrostatic case by the second author with Stavrov in \cite{Sormani-Stavrov}.  See also related work \cite{Lee-Sormani-2} of Lee and the second author, where the almost rigidity of the Penrose inequality for spherically symmetric asymptotically Euclidean manifolds was studied, and a recent work \cite{Allen} of Allen which addresses stability of positive mass theorem and Penrose inequality with respect to $L^2$ metric convergence under the assumption that a smooth uniformly controlled solution of the inverse mean curvature flow exists.  

Here we propose the Intrinsic Flat Almost Rigidity of the Hyperbolic Positive Mass Conjecture:   {\em a sequence of asymptotically hyperbolic manifolds with $R \ge -m(m-1)$ and no closed interior minimal surfaces whose mass converges to $0$
converges in the pointed intrinsic flat sense to hyperbolic space.}  To make this conjecture more precise we must identify where the points are located as we cannot allow them to diverge to infinity or descend into a gravity well.  Thus we imitate one of the precise conjectures in \cite{Lee-Sormani}, locating the points on constant mean curvature surfaces although it is possible we could locate them on other canonically defined surfaces. 

\begin{conj}
Let $M_j^m$ be a sequence of asymptotically hyperbolic Riemannian manifolds
with $R \ge -m(m-1)$ whose mass approaches $0$.    Assume that $M_j^m$
contains no closed interior minimal surfaces and that either $M_j^m$ has no boundary or the boundary is a closed minimal surface.  Fix $A_0>0$ and
take any $D>0$.  Suppose that $\Sigma_j \subset M_j$  
are constant mean curvature (CMC) surfaces of fixed area $A_0$. 
Then the intrinsic flat distance between the tubular neighborhoods converges to 0:
\be
\lim_{j\to \infty} d_{\mathcal{F}}\left(\,T_D(\Sigma_j)\subset M_j^m\,,\, T_D(\Sigma_\infty)\subset \mathbb{H}^m\,\right)\, = 0
 \ee
 and
 \be
 \lim_{j\to \infty} \vol\left(T_D(\Sigma_j)\subset M_j^m\right) =
 \vol\left(T_D(\Sigma_\infty)\subset \mathbb{H}^m\right).
 \ee
Here $\Sigma_\infty$ is a sphere with $\vol_{m-1}(\Sigma_\infty)=A_0$ about any
 point $p_\infty$
 in hyperbolic space, $\mathbb{H}^m$, and $T_D(\Sigma)$ is the 
tubular neighborhood of radius $D$ around $\Sigma$.    
Thus if $p_j \in \Sigma_j$ then $(M_j,p_j)$ converges in the pointed intrinsic 
flat sense to $({\mathbb{H}}^m, p_\infty)$.
\end{conj}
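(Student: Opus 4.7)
The plan is to combine quantitative exterior control from PDE methods with an inward flow estimate that uses the no-closed-interior-minimal-surface hypothesis, and then convert smooth closeness on a well-chosen region into intrinsic flat closeness of $T_D(\Sigma_j)$. The intrinsic flat framework is essential because thin gravity wells may persist and must be ``capped off'' rather than matched to $\mathbb{H}^m$ pointwise; however, the conjecture's volume convergence statement forces us to actually exclude any well lying within the tubular neighborhood, which is much stronger than a pure intrinsic flat convergence would require.

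First I would invoke the asymptotically hyperbolic adaptation of Lee's conformal argument from \cite{Dahl-Gicquaud-Sakovich} to produce, for each large $j$, a region $U_j \subset M_j$ near infinity such that $(U_j,g_j)$ is $C^0$-close to the corresponding region of $\mathbb{H}^m$, with closeness controlled by the mass. Where that input requires conformal flatness of the asymptotic end, I would first reduce to that case by a density or controlled-perturbation argument that preserves the scalar curvature lower bound and the leading-order mass.

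Next I would place $\Sigma_j$ inside $U_j$ by invoking an asymptotically hyperbolic analog of the Huisken--Yau uniqueness theorem for CMC surfaces (in the spirit of work of Rigger, Neves--Tian, and Mazzeo--Pacard): for mass near $0$ and fixed area $A_0$, $\Sigma_j$ must be a small normal graph over a coordinate sphere lying deep in the end of $U_j$. The harder task is to ensure $T_D(\Sigma_j) \subset U_j$. If a gravity well lay within distance $D$ of $\Sigma_j$ on the inside, the volume and diameter of $T_D(\Sigma_j)$ would blow up and the volume-convergence statement of the conjecture would fail. To preclude this, I would run a weak inverse mean curvature flow inward from $\Sigma_j$ and exploit a hyperbolic analog of Geroch monotonicity to bound an enclosed quasi-local mass from below; combined with the no-closed-interior-minimal-surface hypothesis and the assumption that the total mass tends to $0$, this should force the inward geometry to remain close to hyperbolic on all of $T_D(\Sigma_j)$. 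A standard $C^0$-closeness-to-intrinsic-flat estimate for almost-isometric diffeomorphic manifolds (in the spirit of Lakzian--Sormani) then delivers both $d_{\mathcal{F}}(T_D(\Sigma_j),T_D(\Sigma_\infty)) \to 0$ and the required volume convergence.

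The main obstacle by far is the inward quasi-local mass argument in the non-symmetric setting. Without spherical symmetry, neither a Geroch-type monotonicity of Hawking mass along weak IMCF on an asymptotically hyperbolic background, nor the existence of a sufficiently regular weak IMCF starting from an arbitrary CMC surface, is currently available; Allen's work \cite{Allen} sidesteps this precisely by hypothesizing a smooth uniformly controlled IMCF. A replacement would likely need a different quasi-local mass suited to the hyperbolic background, or a conformal/Bray-type flow argument that controls the interior without a geometric flow starting from $\Sigma_j$. Bridging this gap is, in my view, the conceptual heart of the general conjecture and where all of the difficulty relative to the spherically symmetric case treated in this paper resides.
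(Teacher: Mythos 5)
The statement you are trying to prove is stated in the paper only as a conjecture: the paper itself establishes it solely under spherical symmetry (Theorem~\ref{thm-main}), where every $M^m\in\RS_m$ is a radial graph $z=F(r)$ in $\mathbb{H}^m\times\mathbb{R}$, the Hawking mass is monotone in the warped-product sense, and the intrinsic flat distance is estimated by the explicit filling constructions of \cite{Lee-Sormani}. So there is no proof in the paper against which your argument could be matched, and your proposal does not close the gap either: by your own admission, the decisive interior step --- a Geroch-type monotonicity of a hyperbolic Hawking-type quasi-local mass along a weak inverse mean curvature flow started at $\Sigma_j$, or any substitute controlling the region inside $\Sigma_j$ without symmetry --- is not currently available (this is exactly what \cite{Allen} must hypothesize away). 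The exterior input also rests on two unproved reductions: the adaptation of \cite{Dahl-Gicquaud-Sakovich} requires special asymptotics, and your ``density or controlled-perturbation'' reduction preserving both $R\ge -m(m-1)$ and the mass is itself an open problem of the same flavor; likewise the uniform-in-$j$ placement of the fixed-area CMC spheres $\Sigma_j$ needs uniqueness results whose hypotheses must be verified uniformly along the sequence. What you have is a plausible program, not a proof.

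Beyond the acknowledged gap, one concrete step in your outline is wrong, and the paper's symmetric-case proof shows why. You claim that the volume-convergence statement forces $T_D(\Sigma_j)$ to contain no gravity well, since a well within distance $D$ of $\Sigma_j$ would make the volume (and diameter) of $T_D(\Sigma_j)$ blow up. But arbitrarily deep wells of small area radius have small volume: in the paper the well region $A_1$ is permitted to lie inside $T_D(\Sigma_{\alpha_0})$, is cut off at the radius $r_\epsilon$ chosen in Lemma~\ref{lem-well}, and its volume is bounded by $\epsilon/8$ by projecting it radially onto a thin cylinder and vertically onto a small ball. Thus both intrinsic flat convergence and volume convergence are fully compatible with deep wells intersecting the tubular neighborhood, and the conjecture does not ask you to exclude them; what is required is a quantitative bound showing that the well region has small volume and can be excised at small cost in the filling. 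Producing such a bound without spherical symmetry --- rather than precluding wells via an IMCF argument that is both unavailable and, in the form you state it, aimed at a false dichotomy --- is the actual heart of the open problem.
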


Note that for a sufficiently large given area the choice of a constant mean curvature (CMC) surface with that given area is unique.  There is in fact a canonical CMC foliation of asymptotically hyperbolic manifolds with $R \ge -m(m-1)$ just as Huisken-Yau and others proved in the asymptotically Euclidean case  \cite{Huisken-Yau}.  Indeed in 2004, Rigger used Huisken-Yau's method to prove the existence and uniqueness of such a foliation for manifolds asymptotic to the spatial anti-de Sitter-Schwarzschild solution \cite{Rig04}. This result was generalized by Neves-Tian and Chodosh, \cite{NT09}\cite{NT10}\cite{Cho14}.  Cederbaum, Cortier, and the first author proved that the CMC-foliation characterizes the center of mass in the hyperbolic setting \cite{CCS}.   Their work has been further generalized by Nerz in \cite{Nerz} which surveys the prior work in this direction thoroughly.  

Here we test the validity of our new conjecture by proving it in the spherically symmetric case (see Theorem~\ref{thm-main} and
Definition~\ref{def-rot-sym} within).  This class of spaces includes anti-de Sitter-Schwarzschild manifolds (see Remark \ref{Kottler}) and the asymptotically hyperbolic spherically symmetric
gravity wells:

\begin{defn}\label{def-rot-sym}
Given $m\ge 3$,
let $\RS_m$ be the class of complete $m$-dimensional 
$SO(m)$-spherically symmetric smooth Riemannian manifolds with scalar curvature $\mathrm{R}\geq -m(m-1)$ which have no closed interior
minimal hypersurfaces and either have no boundary or have a boundary
which is a stable minimal hypersurface.  
These spaces are warped products with
metrics as in (\ref{eqn-warp-metric}) satisfying (\ref{eqn-f'>0}) and
(\ref{eqn-18}). 
\end{defn}

Our main result is the following theorem.

\begin{thm}\label{thm-main}   
Given any $\epsilon>0$, $D>0$, $A_0>0$ and $m\in \N$,
there exists a $\delta=\delta(\epsilon, D, A_0, m)>0$
such that if $M^m\in\RS_m$ has mass, 
$\mathrm{m}_{\mathrm{AH}}(M)<\delta$, and 
$\mathbb{H}^m$ is hyperbolic space of the same dimension,
then
\be
 d_{\mathcal{F}}(\,T_D(\Sigma_0)\subset M^m\,,\, T_D(\Sigma_0)\subset \mathbb{H}^m\,)\,<\, 
 \epsilon,
 \ee
 and 
 \be
 |\vol(T_D(\Sigma_0)\subset M_j^m) -
 \vol(T_D(\Sigma_0)\subset \mathbb{H}^m)| <  \epsilon 
 \ee
 where  $\Sigma_0$ is the symmetric sphere of area  
$\vol_{m-1}(\Sigma_0)=A_0$, and $T_D(\Sigma_0)$ is the 
tubular neighborhood of radius $D$ around $\Sigma_0$.   
Precise estimates on $\delta=\delta(\epsilon, A_0, D, m)$ are provided 
within the proof. 
\end{thm}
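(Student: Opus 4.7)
The plan is to adapt the area-radius reparametrization and gravity-well analysis developed by Lee and the second author \cite{Lee-Sormani} from the asymptotically Euclidean spherically symmetric setting, replacing the Euclidean background with hyperbolic space in area-radius form. Writing the warped product metric on $M\in\RS_m$ as $dr^2 + f(r)^2\, g_{S^{m-1}}$ and setting $s = f(r)$, one obtains the Kottler-type form
\begin{equation*}
g_M \;=\; \frac{ds^2}{V_M(s)} + s^2\, g_{S^{m-1}}, \qquad V_M(s) \;:=\; 1 + s^2 - \frac{2\,m_M(s)}{s^{m-2}},
\end{equation*}
where $m_M(s)$ is the asymptotically hyperbolic Hawking mass. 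The hypothesis $\mathrm{R}\ge -m(m-1)$ forces $m_M$ to be non-decreasing; the minimal-or-empty boundary hypothesis forces $m_M\ge 0$; and the mass bound gives $0\le m_M(s)\le \mathrm{m}_{\mathrm{AH}}(M) < \delta$ throughout. Hyperbolic space corresponds to $V_H(s) = 1+s^2$.

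Next, let $s_0$ be the area radius of $\Sigma_0$, so that $s_0^{m-1}\vol_{m-1}(S^{m-1}) = A_0$. For a small auxiliary parameter $\eta>0$ introduce the gravity well set
\begin{equation*}
B_\eta \;:=\; \bigl\{\,s : 2\,m_M(s) > \eta\, s^{m-2}(1+s^2)\,\bigr\}.
\end{equation*}
Since $m_M<\delta$, membership in $B_\eta$ forces $s^{m-2}(1+s^2)<2\delta/\eta$, so $B_\eta\subset(0,s_\eta]$ with $s_\eta\to 0$ as $\delta\to 0$ at $\eta$ fixed. Outside $B_\eta$ the ratio $V_M/V_H$ lies in $[1-\eta,1]$, so $g_M$ and $g_H$ are uniformly bi-Lipschitz of constant $1+O(\eta)$. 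If $M$ has a minimal boundary at area radius $s_{\mathrm{bdry}}$, then $V_M(s_{\mathrm{bdry}})=0$ together with $m_M<\delta$ forces $s_{\mathrm{bdry}}^{m-2}(1+s_{\mathrm{bdry}}^2)\le 2\delta$, so the horizon itself shrinks as $\delta\to 0$. Consequently the signed radial distances from $\Sigma_0$ in $M$ and in $\mathbb{H}^m$ agree up to $O(\eta)$ outside $B_\eta$, so the two tubes cover nearly the same $s$-range with matching sphere identifications.

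The third and crucial step is to bound the intrinsic flat distance. Following the Lakzian-Sormani subdiffeomorphism framework and the hemispherical embedding construction of \cite{Lee-Sormani}, identify the two tubes via the area-radius parameter on the region $s > s_\eta$; this identification is a $(1\pm O(\eta))$-bi-Lipschitz diffeomorphism of warped-product cylinders. Build an ambient warped-product filling of thickness $O(\eta)$ into which both restricted tubes embed isometrically, excise from $M$'s tube the portion with $s\le s_\eta$ and from $\mathbb{H}^m$'s tube the corresponding piece, and estimate $d_\mathcal{F}$ by the sum of (i) the filling volume, of order $\eta\cdot\vol(T_D(\Sigma_0)\subset\mathbb{H}^m)$; (ii) the $(m-1)$-areas of the two inner spherical boundaries of the excisions, of order $s_\eta^{m-1}$; and (iii) the volumes of the excised regions themselves, of order $s_\eta^{m-1}\cdot D$.

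The main obstacle is precisely the gravity well: smallness of the total mass $\delta$ does \emph{not} preclude arbitrarily deep but $s$-narrow wells, so within $B_\eta$ the radial length $\int_{B_\eta} ds/\sqrt{V_M(s)}$ may be comparable to $D$ itself. The essential observation, mirroring the Euclidean analysis of \cite{Lee-Sormani}, is that the cross-sectional sphere areas in $B_\eta$ are tiny because $s\le s_\eta\to 0$, so both the excised $M$-volume and its spherical boundary contribute only a small amount despite the possible depth. The volume comparison $|\vol(T_D)_M-\vol(T_D)_H|$ follows analogously by splitting the integrals $\vol_{m-1}(S^{m-1})\int s^{m-1}/\sqrt{V(s)}\,ds$ at $s_\eta$: the outer part contributes an $O(\eta)$ multiplicative error, while the inner parts contribute $O(s_\eta^{m-1})\cdot D$. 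Choosing first $\eta$ small in terms of $\epsilon, D, A_0, m$ and then $\delta$ small in terms of $\eta$ and the other parameters produces the explicit $\delta=\delta(\epsilon,D,A_0,m)$ claimed.
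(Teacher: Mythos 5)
Your proposal takes essentially the same route as the paper. Both reparametrize by the area radius (the paper uses the graph coordinate $r$ with $\sinh r$ equal to the area radius, so your $V_M$ and the paper's $F'$ are related by $(F')^2 = V_H/V_M - 1$); both exploit the monotonicity of the spherical Hawking mass a la Geroch to deduce that the metric is close to hyperbolic away from small area radius; both cut off a possibly deep gravity well at a threshold radius chosen so that the excised $m$-volume is controlled by the tiny $(m-1)$-area of the bounding spheres times $D$ (this is exactly the paper's Lemma~\ref{lem-well}, and you correctly identify it as the crucial observation); and both then build a thin filling between the truncated tubes. The main difference is the technical wrapper for bounding $d_{\mathcal F}$: the paper embeds both tubes as graphs into ${\mathbb H}^m\times{\mathbb R}$, takes advantage of the fact that ${\mathbb H}^m$ embeds there \emph{distance-preservingly} (so only $M$ needs a correcting strip), and applies the Lee-Sormani filling lemmas (Theorems~\ref{thm-embed-const}--\ref{embed-const-2} here), whereas you invoke the Lakzian-Sormani bi-Lipschitz framework on the area-radius identification. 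Both mechanisms succeed for these warped products, and the resulting estimates are quantitatively equivalent; the paper's choice yields a somewhat cleaner bookkeeping because one side of the filling costs nothing. You gloss over two details the paper handles explicitly and which you would need to make precise: (i) the outer boundaries of the two tubes do not coincide in the area-radius coordinate (this produces the paper's excess region $A_0$, estimated in Lemma~\ref{lem-switch-1}), and (ii) the case split on whether the well actually intersects $T_D(\Sigma_0)$ (giving the two definitions of $A_2$ and the $r_\epsilon=\max\{r'_\epsilon,r_{D-}\}$ convention). Neither is a fundamental gap, but both are needed to turn your sketch into a proof with explicit $\delta(\epsilon,D,A_0,m)$.
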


The paper is organized as follows. In Section \ref{Sect-Embed} we review
the key results and theorems about intrinsic flat convergence needed to
prove our theorem. In Section \ref{Sect-Positive} we summarize the properties of the spherically symmetric manifolds in $\RS_m$, provide the background material on the positive mass theorem and discuss some immediate consequences of having small mass  in the spherically symmetric setting. These results are applied in Section \ref{Sect-Pos}, where we prove Theorem \ref{thm-main}. 

\vspace{.3cm}

\noindent{\bf{Acknowledgments:}}   The authors would like to thank the organizers of the Mathematical General Relativity Program at MSRI and especially James Isenberg, Piotr Chru\'sciel, Greg Galloway, and Richard Schoen for encouraging everyone to work together so actively.  We would also like to thank H\'el\`ene Barcelo and David Eisenbud for everything they do to make MSRI such an amazing place work. The time we spent there was invaluable to our careers.  We would also like to thank 
Hubert Bray, Otis Chodosh, Greg Galloway, and Pengzi Miao for
organizing the AMS Special Session at Charleston where we completed this first paper in our international collaboration.   We hope to have many more delightful
interactions with everyone for years to come.

\section{{\bf{Estimating the Intrinsic Flat Distance}}}
\label{Sect-Embed}

In this section we review the definition of the intrinsic flat distance and the
theorems we will apply to prove our main theorem.  The notion was defined by the second author and Wenger in \cite{SorWen2} as a distance, $d_{\mathcal{F}}(M^m_1, M^m_2)$, between 
oriented compact Riemannian manifolds with boundary, $M^m_j$, and other more general spaces called integral current spaces.  
It was proven to satisfy symmetry, the triangle inequality, and
most importantly,  $d_{\mathcal{F}}(M^m_1, M^m_2)=0$ if and only if there is
an orientation preserving isometry between the spaces.  In order to rigorously define the intrinsic flat distance and integral current spaces, and prove this theorem one needs the work of  Ambrosio-Kirchheim \cite{AK} which builds upon the
Geometric Measure Theory developed by Federer-Fleming \cite{FF}.   

\subsection{Filling Manifolds and Excess Boundaries}

In this paper we need only estimate the intrinsic flat distance between Riemannian manifolds with boundary and so we do not need the full definition which requires
Geometric Measure Theory.  Instead we apply the following:
\be \label{eqn-def-intrinsic-flat-1}
d_{\mathcal{F}}(M^m_1, M^m_2) \le \inf_{A, B \subset Z}\left\{\vol_{m+1}(B^{m+1}) +\vol_m(A^m)\right\}
\ee
where we are taking an infimum over all piecewise smooth
Riemannian manifolds, $Z^{m+1}$, 
such that there are distance preserving embeddings, $\psi_i: M_i^m \to Z^{m+1}$,
and over all oriented submanifolds $B^{m+1}\subset Z^{m+1}$ and 
$A^m \subset Z^{m+1}$
such that
\be \label{eqn-def-intrinsic-flat-2}
\int_{\psi_1(M_1)}\omega -\int_{\psi_2(M_2)}\omega=\int_{\partial B}\omega + \int_A\omega 
\ee
for any differential $m$-form $\omega$ on $Z^{m+1}$.   In this case, 
$B^{m+1}$ is called a {\em filling manifold} between $M_1$ and $M_2$
and $A^m$ is called the {\em excess boundary} and each might have
more than one component.

 \subsection{Distance Preserving Embeddings vs Riemannian Isometric Embeddings}
 
 Note that a distance preserving embedding, 
$\psi: M \to Z$ is a map such that
\be\label{eqn-def-metric-isom-embed}
d_Z(\psi(x), \psi(y)) = d_M(x,y) \qquad \forall x,y \in M.
\ee
This is significantly stronger than a Riemannian isometric embedding
which preserves only the Riemannian structure and thus lengths
of curves but not distances between points as in 
(\ref{eqn-def-metric-isom-embed}).   For example 
$\psi: {\mathbb{S}}^1 \to {\mathbb{S}}^2$ mapping the circle to the
equator of a sphere is both a distance preserving embedding
and a Riemannian isometric embedding.
However $\psi: {\mathbb{S}}^1 \to {\mathbb{D}}^2$,
 mapping a circle to the boundary of a flat disk 
is a Riemannian isometric embedding but not a distance preserving embedding.

If one were to use
Riemannian isometric embeddings in the definition of intrinsic flat distance
then any manifold would lie arbitrarily close to any other manifold, as one
could create a warped product through a point between them of arbitrarily
small volume.  For example, take $\psi: {\mathbb{S}}^1 \to Z^2$ mapping
to the $\epsilon$ level set of $Z^2= [0, \epsilon] \times_f {\mathbb{S}}^1$ 
where $f$ increases from $f(0)=0$ to $f(\epsilon)=1$.  This is a Riemannian
isometric embedding and the $\vol(Z^2)< \epsilon$.  The reader should be warned that distance preserving embeddings as in (\ref{eqn-def-metric-isom-embed})
are called "isometric embeddings" by metric geometers like Gromov, Ambrosio,
Kirchheim and the second author in some of her papers.  
 
 In many settings it is far easier to construct a Riemannian isometric embedding than it is to find a distance preserving embedding.  In fact we will be constructing
 Riemannian isometric embeddings in this paper from the regions in our
 spherically symmetric asymptotically hyperbolic manifolds into 
 ${\mathbb{H}}^m \times \mathbb{R}$ with the standard product metric.  Once we have constructed those Riemannanian isometric embeddings we will need to construct filling spaces $Z$
 and distance preserving embeddings of our regions into $Z$.  The following collection of theorems proven by  Lee and the second author  in \cite{Lee-Sormani} will enable us to complete these constructions.  
 
 We begin with a definition which quantifies how far a
 Riemannian isometric embedding is from a distance preserving embedding:
 
\begin{defn}
Let $\varphi: M \to N$ be a Riemannian isometric embedding.  Then the
embedding constant may be defined:
\be \label{eqn-embed-const-1}
C_M:= \sup_{p,q\in M} \left( d_M(p, q) - d_N(\varphi(p),\varphi(q)) \right).
\ee
\end{defn}

In \cite{Lee-Sormani} Lee and the second author prove the following theorem:

\begin{thm}\label{thm-Z}
Let $M^m$ be a compact Riemannian manifold with boundary
defined by the graph
\be
M^m=\{(x,z):\, z=F(x), \, x\in W\} \subset W \times \R
\ee
where $F: W\to \R$ is differentiable and $W$ is
a Riemannian manifold with boundary.
Viewed
as a Riemannian isometric embedding into $W\times \R$,
the embedding constant satisfies
\be \label{eqn-C_M-bound-1}
C_M\le 2\diam(W) \sup\left\{|\grad F_x|: \, x\in W\right\}. 
\ee
\end{thm}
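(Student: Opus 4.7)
The embedding constant $C_M$ measures the discrepancy between the intrinsic graph distance $d_M$ and the ambient product distance in $N = W \times \R$. My plan is to bound $d_M(p,q)$ from above by lifting a minimizing path in $W$, and to bound $d_N(\varphi(p), \varphi(q))$ from below by projecting onto $W$. Fix $p = (x_p, F(x_p))$ and $q = (x_q, F(x_q))$ in $M$, set $L := d_W(x_p, x_q)$, and write $K := \sup_{x \in W}|\grad F_x|$.

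For the lower bound on the ambient distance, the product metric on $W \times \R$ gives
$$d_N(\varphi(p), \varphi(q)) = \sqrt{d_W(x_p, x_q)^2 + (F(x_p) - F(x_q))^2} \geq L,$$
since the projection $W \times \R \to W$ is $1$-Lipschitz. For the upper bound on $d_M$, let $\alpha : [0, L] \to W$ be a unit-speed minimizing path from $x_p$ to $x_q$, and lift it to $\tilde\alpha(t) := (\alpha(t), F(\alpha(t))) \in M$. Because $\varphi$ is a Riemannian isometric embedding, the intrinsic length of $\tilde\alpha$ in $M$ equals its length measured in $W \times \R$, and there the speed splits orthogonally as
$$|\tilde\alpha'(t)|^2 = |\alpha'(t)|^2 + \bigl(dF_{\alpha(t)}(\alpha'(t))\bigr)^2 \leq 1 + K^2$$
by Cauchy--Schwarz. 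Hence $d_M(p, q) \leq \length(\tilde\alpha) \leq L\sqrt{1 + K^2}$.

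Combining the two estimates and using the elementary inequality $\sqrt{1+K^2} \leq 1 + K$ gives
$$d_M(p, q) - d_N(\varphi(p), \varphi(q)) \;\leq\; L\bigl(\sqrt{1+K^2} - 1\bigr) \;\leq\; L \cdot K \;\leq\; \diam(W)\cdot K,$$
and taking the supremum over $p, q \in M$ yields $C_M \leq \diam(W) \cdot K$, which is in fact stronger than the stated bound, so the factor of $2$ is comfortably absorbed. There is no serious obstacle here; the only conceptual care needed is to keep the two notions of embedding distinct, since $C_M$ measures precisely the failure of a \emph{Riemannian isometric} embedding to be \emph{distance preserving}, and the argument exploits the former property to equate $\length(\tilde\alpha)$ computed intrinsically in $M$ with its length computed in the ambient $W \times \R$.
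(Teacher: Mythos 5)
Your argument is correct and uses the natural lift-and-project strategy: lower-bound $d_N$ via the $1$-Lipschitz projection $W\times\R \to W$, upper-bound $d_M$ by lifting a (nearly-)minimizing path in $W$ to the graph and estimating its length with Cauchy--Schwarz, and then compare. The paper itself does not reprove Theorem~\ref{thm-Z} but cites it from Lee--Sormani, where (per the remark following the statement) a sharper, more elaborate estimate for $C_M$ is first established and \eqref{eqn-C_M-bound-1} is extracted as a coarser consequence; your direct argument bypasses that refinement and goes straight for the coarse bound. Two small remarks worth noting. First, you correctly obtain $C_M \le \diam(W)\,\sup|\grad F|$, i.e.\ without the factor of $2$, which confirms that \eqref{eqn-C_M-bound-1} is not tight; nothing downstream in the paper depends on the constant, so this is consistent. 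Second, the displayed equality $d_N(\varphi(p),\varphi(q)) = \sqrt{d_W(x_p,x_q)^2 + (F(x_p)-F(x_q))^2}$ is not itself used (and would require a separate justification for the Riemannian product of a manifold with boundary); only the inequality $d_N \ge d_W(x_p,x_q)$ matters, and that is exactly what the $1$-Lipschitz projection gives. Likewise, existence of a genuinely minimizing $\alpha$ is unnecessary: taking paths of length $\le L+\eta$ and letting $\eta\to 0$ gives the same bound, so the argument does not rely on $W$ (a compact manifold with boundary) being a geodesic space, though it is.
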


Note that a better estimate for $C_M$ is available, see \cite[Remark 3.7]{Lee-Sormani}. However, the estimate \eqref{eqn-C_M-bound-1} suffices for the purposes of the current paper.   

\subsection{Constructing Filling Manifolds}\label{ss-constr}
 
 We now present a few theorems from the work of Lee and the second author
 \cite{Lee-Sormani} which start with a pair of manifolds that have Riemannian
 isometric embeddings into a common manifold, and then construct a new
 space $Z$ into which they have distance preserving embeddings.  These
 theorems allow us to estimate the intrinsic flat distances between these manifolds.

We begin with the basic construction:

\begin{thm}\label{thm-embed-const}
Let $\varphi: M \to N$ be a Riemannian isometric embedding and let
\be \label{eqn-embed-const-1}
C_M:= \sup_{p,q\in M} \left( d_M(p, q) - d_N(\varphi(p),\varphi(q)) \right).
\ee
Taking
\be\label{eqn-embed-const-S}
S_M= \sqrt{C_M(\diam(M)+C_M)},
\ee
we may construct a piecewise smooth Riemannian manifold, $Z$,
by attaching a strip of width $S_M$ to the manifold $N$ as follows:
\be
Z=\left\{(x,s): \, x\in \varphi(M),\, s\in [0, S_M]\right\} \cup \left\{(x,0): \,x\in N\right\}  
\subset N \times [0, S_M]
\ee
so that we have a distance preserving embedding,
$\psi: M \to Z$, from $M$ into the far side of the strip given by 
\be
\psi(x)=(\varphi(x), S_M).
\ee   
Here the distances
in $Z$, denoted $d_Z$, are found by taking the infimum of lengths of
curves in $Z$ just as in any Riemannian manifold.
\end{thm}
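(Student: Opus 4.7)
The goal is to check $d_Z(\psi(x), \psi(y)) = d_M(x, y)$ for all $x, y \in M$, which I would establish as two matching inequalities.

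For the upper bound $d_Z(\psi(x), \psi(y)) \le d_M(x, y)$, I would observe that the strip $\varphi(M) \times [0, S_M] \subset Z$, with the metric induced from the ambient Riemannian product $N \times [0, S_M]$, is itself a Riemannian product isometric to $M \times [0, S_M]$ via $\varphi \times \mathrm{id}$ (this uses that $\varphi$ is a Riemannian isometric embedding). Hence for any curve $\gamma_M : [0, 1] \to M$ from $x$ to $y$, the horizontal lift $t \mapsto (\varphi(\gamma_M(t)), S_M)$ is a curve in $Z$ of the same length, and taking the infimum over $\gamma_M$ gives the claim.

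For the lower bound I would take an arbitrary rectifiable curve $\gamma$ from $\psi(x)$ to $\psi(y)$ in $Z$ and show $\length(\gamma) \ge d_M(x, y)$ via a case split on whether $\gamma$ ever reaches the level $s = 0$. If it does not, then $\gamma$ stays in the open strip, and the 1-Lipschitz product projection $\varphi(M) \times [0, S_M] \to \varphi(M)$ yields a curve from $\varphi(x)$ to $\varphi(y)$ of length at most $\length(\gamma)$ but at least the intrinsic distance $d_{\varphi(M)}(\varphi(x), \varphi(y)) = d_M(x, y)$. If $\gamma$ does reach $s = 0$, let $t_1 \le t_2$ be the first and last such times and write $\gamma(t_1) = (\varphi(a), 0)$, $\gamma(t_2) = (\varphi(b), 0)$ for some $a, b \in M$. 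The two closed-strip segments $\gamma|_{[0,t_1]}$ and $\gamma|_{[t_2,1]}$ have lengths at least the intrinsic strip distances $\sqrt{d_M(x,a)^2 + S_M^2}$ and $\sqrt{d_M(b,y)^2 + S_M^2}$, by the Pythagorean formula for distances in a Riemannian product. The middle segment, pushed forward by the 1-Lipschitz retraction $\Pi : Z \to N$ that forgets the vertical coordinate on the strip and is the identity on $N$, yields a curve in $N$ from $\varphi(a)$ to $\varphi(b)$ and therefore has length at least $d_N(\varphi(a), \varphi(b)) \ge d_M(a, b) - C_M$ by the very definition of $C_M$.

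Summing these three estimates and applying the triangle inequality $d_M(x, y) \le d_M(x, a) + d_M(a, b) + d_M(b, y)$ reduces the entire lower bound to the elementary inequality
\[
\sqrt{p^2 + S_M^2} - p + \sqrt{q^2 + S_M^2} - q \ge C_M \quad \text{for all } p, q \in [0, \diam(M)].
\]
Because $L \mapsto \sqrt{L^2 + S_M^2} - L$ is decreasing on $[0, \infty)$, it suffices to verify the extremal case $p = q = \diam(M) =: D$, i.e.\ $2(\sqrt{D^2 + S_M^2} - D) \ge C_M$; squaring and substituting $S_M^2 = C_M(D + C_M)$ collapses this to $3 C_M^2 \ge 0$. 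The step I expect to be most delicate is the careful bookkeeping of two different metrics in play: lengths inside the strip are measured using the product metric built from $M$'s intrinsic metric (via the Riemannian isometric embedding $\varphi$), while corresponding lengths in the base $N$ can be strictly shorter by up to $C_M$; the width $S_M = \sqrt{C_M(\diam(M) + C_M)}$ is tuned exactly so that the extra vertical cost of dropping down to $N$ through the strip precisely offsets the savings one could gain by traveling through $N$.
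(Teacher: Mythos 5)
The paper does not actually prove Theorem~\ref{thm-embed-const}; it is quoted from \cite{Lee-Sormani} without reproof. Your argument is a complete and correct proof of the statement, and it is essentially the same construction used in \cite{Lee-Sormani}: the upper bound from paths running along the level $s=S_M$, the lower bound by splitting a competitor curve at its first and last visit to $s=0$, the Pythagorean lower bound on lengths in a Riemannian product for the two strip legs, the 1-Lipschitz retraction to $N$ and the definition of $C_M$ for the middle leg, the triangle inequality in $M$, monotonicity of $L\mapsto\sqrt{L^2+S_M^2}-L$ to reduce to $p=q=\diam(M)$, and the algebraic verification that $S_M^2=C_M(\diam(M)+C_M)$ makes the inequality collapse to $3C_M^2\ge 0$. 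One small point worth making explicit when you write this up: the ``Pythagorean formula'' you invoke for the strip legs is really the Pythagorean \emph{lower} bound $\int\sqrt{a^2+b^2}\,dt\ge\sqrt{(\int a)^2+(\int b)^2}$ applied to the lengths of the two coordinate projections; equality of distances in a Riemannian product need not hold when $M$ has boundary, but the lower bound, which is all you use, always does.
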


As a consequence of Theorem \ref{thm-embed-const}, the following bound on the intrinsic flat distance was obtained in \cite{Lee-Sormani}.   We start with two
manifolds isometrically embedded into a third, and then
$Z$ is constructed by applying Theorem~\ref{thm-embed-const} twice, building a strip for each $M_i$.  The intrinsic flat distance between the $M_i$ is then
the sum of the volumes of the strips and their boundaries and the volume between the Riemannian isometric embeddings and the last piece of excess boundary.

\begin{thm}\label{embed-const}
If $\varphi_i: M^m_i \to N^{m+1}$ are Riemannian isometric embeddings with 
embedding constants $C_{M_i}$ as in (\ref{eqn-embed-const-1}), and if
they are disjoint and lie in the boundary of a region $W \subset N$
then 
\begin{eqnarray}
d_{\mathcal{F}}(M_1, M_2) &\le& 
S_{M_1}\left(\vol_m(M_1)+ \vol_{m-1}(\partial M_1) \right) \\
&&+S_{M_2}\left(\vol_m(M_2)+ \vol_{m-1}(\partial M_2) \right)\\
&&+ \vol_{m+1}(W) + \vol_{m}(V)
\end{eqnarray}
where $V= \partial W \setminus \left( \varphi_1(M_1) \cup \varphi_2(M_2)\right)$, and $S_{M_i}$ are defined in (\ref{eqn-embed-const-S}).
\end{thm}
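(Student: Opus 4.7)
The plan is to build a single piecewise smooth Riemannian $(m+1)$-manifold $Z$ that contains distance preserving images of both $M_1$ and $M_2$, and then read off the bound directly from \eqref{eqn-def-intrinsic-flat-1}--\eqref{eqn-def-intrinsic-flat-2}. The natural candidate is to start with the closure of $W$ inside $N$ and, along each of the disjoint subsets $\varphi_i(M_i)\subset \partial W$, glue on a strip of width $S_{M_i}$ exactly as in Theorem~\ref{thm-embed-const}. Symbolically,
\[
Z \;=\; \overline{W} \;\cup\; \bigl(\varphi_1(M_1)\times[0,S_{M_1}]\bigr) \;\cup\; \bigl(\varphi_2(M_2)\times[0,S_{M_2}]\bigr),
\]
with the product metric on each strip and the metric induced from $N$ on $\overline{W}$.

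First, I would define $\psi_i\colon M_i\to Z$ by $\psi_i(x)=(\varphi_i(x),S_{M_i})$, so that each $M_i$ is realized as the far face of its own strip. Theorem~\ref{thm-embed-const} already guarantees that this map is distance preserving when only one strip is attached to $N$. Since $\varphi_1(M_1)$ and $\varphi_2(M_2)$ are disjoint subsets of $\partial W$, the two strips meet $\overline{W}$ along disjoint regions, and any path in $Z$ joining two points of $\psi_i(M_i)$ that dips into the opposing strip must cross that strip's full width going in and again coming out. Such a detour can never beat the paths analyzed in Theorem~\ref{thm-embed-const}, so each $\psi_i$ remains a distance preserving embedding in the enlarged space.

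Next, I would take the filling region to be $B=Z$ itself and the excess boundary to be $A = V \sqcup (\partial M_1\times[0,S_{M_1}]) \sqcup (\partial M_2\times[0,S_{M_2}])$, i.e., the portions of $\partial Z$ that are neither $\psi_1(M_1)$ nor $\psi_2(M_2)$. With compatible orientations, Stokes' theorem on $Z$ yields exactly \eqref{eqn-def-intrinsic-flat-2}. Computing volumes strip by strip then gives
\begin{align*}
d_{\mathcal{F}}(M_1,M_2) &\le \vol_{m+1}(Z) + \vol_m(A) \\
&= \vol_{m+1}(W) + S_{M_1}\vol_m(M_1) + S_{M_2}\vol_m(M_2) \\
&\quad\; + \vol_m(V) + S_{M_1}\vol_{m-1}(\partial M_1) + S_{M_2}\vol_{m-1}(\partial M_2),
\end{align*}
which rearranges to the claimed bound.

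The main obstacle is the distance preserving step: one must check that attaching both strips simultaneously does not spoil the conclusion of Theorem~\ref{thm-embed-const} for either $\psi_i$. The disjointness of the two gluing loci, together with the fact that any shortcut through the opposing strip wastes at least $2S_{M_j}$ in length, is the essential input, but writing this out carefully (and verifying that the resulting $Z$ is admissible as a piecewise smooth filling in the definition of $d_{\mathcal{F}}$) is the only place where real work is needed.
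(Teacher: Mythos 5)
Your construction is exactly the one the paper describes (it attributes the proof to Lee--Sormani \cite{Lee-Sormani} and sketches it in the paragraph preceding the statement): apply Theorem~\ref{thm-embed-const} twice to attach a strip of width $S_{M_i}$ over each $\varphi_i(M_i)$, take the filling $B$ to be $W$ together with the two strips, take the excess boundary $A$ to be $V$ together with the two lateral faces $\partial M_i \times [0,S_{M_i}]$, and add up volumes. The only place your reasoning is imprecise is the justification that each $\psi_i$ stays distance preserving once both strips are attached: a path that dips into the opposing strip need not ``cross that strip's full width'' (it can enter at height $0$, rise to any $h<S_{M_j}$, and return), so the claimed $2S_{M_j}$ penalty is not the right argument. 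The correct and standard fix is a projection argument: any arc lying in the strip $\varphi_j(M_j)\times[0,S_{M_j}]$ projects, with no increase in length, onto $\varphi_j(M_j)\times\{0\}\subset\partial W$, so the competitor path can be replaced by one that avoids the opposing strip entirely and Theorem~\ref{thm-embed-const} applies verbatim; likewise, replacing $N$ by $\overline{W}$ only removes paths and so cannot spoil the lower bound $d_Z(\psi_i(p),\psi_i(q))\ge d_{M_i}(p,q)$, while the lid $\psi_i(M_i)$ still supplies the matching upper bound. You flagged this as ``the only place where real work is needed,'' and with that projection step filled in the proof is complete and agrees with the paper's.
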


In a more general case when a pair of manifolds does not have global
Riemannian
isometric embeddings into a common manifold $N^{m+1}$ 
the following result from \cite{Lee-Sormani} can be applied.   Here we have
only regions $U_i \subset M_i$ which embed into a common $N^{m+1}$
so the regions are used to construct the strips and the extra parts of
$M_i \setminus U_i$ are just extra pieces of excess boundary.

\begin{thm}\label{embed-const-2}
If $M^m_i$ are Riemannian manifolds and $U^m_i\subset M^m_i$
are submanifolds that have Riemannian
isometric embeddings $\varphi_i: U^m_i \to N^{m+1}$ with 
embedding constants $C_{U_i}$ as in (\ref{eqn-embed-const-1}), and if
their images are disjoint and lie in the boundary of a region $W \subset N$
then
\begin{eqnarray}
d_{\mathcal{F}}(M_1, M_2) &\le& 
S_{U_1}\left(\vol_m(U_1)+ \vol_{m-1}(\partial U_1) \right)\\
&&+S_{U_2}\left(\vol_m(U_2)+ \vol_{m-1}(\partial U_2) \right)\\
&&+ \vol_{m+1}(W) + \vol_{m}(V) \\
&&+\vol_{m}(M_1\setminus U_1)
+\vol_m(M_2\setminus U_2)
\end{eqnarray}
where 
$V= \partial W \setminus \left( \varphi_1(U_1) \cup \varphi_2(U_2)\right)$
where $S_{U_i}$ are defined in (\ref{eqn-embed-const-S}).
\end{thm}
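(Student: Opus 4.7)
The plan is to generalize the construction of Theorem \ref{embed-const} by building a filling manifold $Z$ that accommodates the fact that only the sub-regions $U_i \subset M_i$, rather than the full $M_i$, embed into $N$. I will use the definition of intrinsic flat distance via filling manifolds and excess boundaries from \eqref{eqn-def-intrinsic-flat-1}--\eqref{eqn-def-intrinsic-flat-2}, so my task reduces to (a) producing $Z$ together with distance preserving embeddings $\psi_i: M_i \to Z$, (b) identifying a filling region $B \subset Z$ and an excess boundary $A \subset Z$ that satisfy the integral identity, and (c) estimating $\vol_{m+1}(B) + \vol_m(A)$ by the right-hand side of the claimed inequality.

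For step (a), I apply Theorem \ref{thm-embed-const} to each Riemannian isometric embedding $\varphi_i: U_i \to N$ separately to produce a strip of width $S_{U_i}$ attached to $N$ along $\varphi_i(U_i)$, which supplies a distance preserving embedding of $U_i$ onto the far side of the strip. Since $\varphi_1(U_1)$ and $\varphi_2(U_2)$ are disjoint and both lie in $\partial W$, the two strips can be attached on opposite sides of $W$ without interfering. I then build $Z$ by gluing, along $\partial U_i$, the complementary piece $M_i \setminus U_i$ (equipped with its original Riemannian metric on $M_i$) onto the far side of strip $i$, so that the entire $M_i$ sits inside $Z$ via a map $\psi_i$ that agrees with the far-side embedding on $U_i$ and with the identity on $M_i \setminus U_i$. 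The key point to verify is that $\psi_i$ is genuinely distance preserving on all of $M_i$: the strip width $S_{U_i}$ was chosen precisely (as in Theorem \ref{thm-embed-const}) to prevent any shortcut through $N$ between pairs of points of $U_i$, and any path in $Z$ between two points of $M_i$ that leaves $M_i$ must traverse a strip, so no shortcuts exist.

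For step (b), I set
\[
B = Z_1 \cup W \cup Z_2, \qquad A = A_1^{\text{side}} \cup A_2^{\text{side}} \cup V \cup (M_1\setminus U_1) \cup (M_2 \setminus U_2),
\]
where $Z_i$ is strip $i$, $A_i^{\text{side}}$ consists of its lateral faces (over $\partial U_i$), and orientations are chosen so that the boundary contributions of $\psi_i(U_i)$ in $\partial B$ combine with the $\psi_i(M_i \setminus U_i)$ pieces of $A$ to reconstitute $\psi_i(M_i)$. A direct application of Stokes' theorem to $B$, exactly as in the proof of Theorem \ref{embed-const}, yields the required identity
\[
\int_{\psi_1(M_1)}\omega - \int_{\psi_2(M_2)}\omega = \int_{\partial B}\omega + \int_A \omega
\]
for every smooth $m$-form $\omega$ on $Z$.

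Step (c) is then a routine computation: the strips contribute $\vol_{m+1}(Z_i) \le S_{U_i}\vol_m(U_i)$ and $\vol_m(A_i^{\text{side}}) \le S_{U_i}\vol_{m-1}(\partial U_i)$, while $W$ contributes $\vol_{m+1}(W)$ to $B$ and $V$, together with the isometric copies of $M_i \setminus U_i$, to $A$. Summing the two totals yields exactly the stated bound. The main technical obstacle is the verification in step (a) that attaching $M_i \setminus U_i$ at $\partial U_i$ onto the far side of a strip whose width was calibrated only for $U_i$ does not destroy the distance preserving property of $\psi_i$; this is why $M_i \setminus U_i$ must be glued on the far side rather than anywhere near $N$, and why it is then treated as excess boundary rather than as part of the filling region.
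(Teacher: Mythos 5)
Your step (a) is where the argument breaks. The claim that, after gluing $M_i\setminus U_i$ onto the far side of strip $i$, the maps $\psi_i\colon M_i\to Z$ are still distance preserving is not justified by ``any path that leaves $M_i$ must traverse a strip,'' and in the stated generality it is simply not true. The width $S_{U_i}$ in Theorem~\ref{thm-embed-const} is calibrated against one specific type of shortcut: an excursion that descends into $N$, whose projection to $N$ has length at least $d_N(\varphi_i(p),\varphi_i(q))\ge d_{U_i}(p,q)-C_{U_i}$. Once the complements are glued in, an excursion between two points of $\psi_1(M_1)$ may pass through $\psi_2(M_2\setminus U_2)$, and that portion has no projection to $N$, so the calibration of $S_{U_1},S_{U_2}$ gives no lower bound on its length; traversing the strips only costs $2S_{U_1}+2S_{U_2}$, which can be far smaller than the distances you need to protect. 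Concretely, take $U_1=M_1$ a long flat rectangle $[0,2L]\times[0,w]$ embedded flat in $N^3$ (so $C_{U_1}=0$), take $U_2$ a congruent parallel rectangle at small height $h$ with two tiny disks removed near its short ends (so $C_{U_2}$ is tiny), and let $M_2$ be $U_2$ with a thin tube of tiny length and area glued along the two boundary circles (an abstract handle, which need not embed in $N$). All hypotheses of the theorem hold, yet in your $Z$ the two ends of $\psi_1(M_1)$ are joined by a path of length roughly $2S_{U_1}+2S_{U_2}+2h+(\text{handle length})\ll 2L=d_{M_1}$, so $\psi_1$ is not distance preserving, $Z$ is not admissible in \eqref{eqn-def-intrinsic-flat-1}, and steps (b)--(c) have nothing to rest on. Note also that even for a pair with one point in $M_i\setminus U_i$ your reduction to exit/entry points in $U_i$ needs the very excursion estimate that fails here, so the problem is not confined to exotic pairs.

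For comparison: this paper does not prove the statement at all (it is quoted from \cite{Lee-Sormani}), but the sentence preceding it indicates the intended mechanism, which differs from yours at exactly the problematic point: the complements $M_i\setminus U_i$ are never embedded into the filling space and never required to sit distance-preservingly anywhere; they enter only as extra excess boundary. One way to organize the argument is to regard $U_i$, equipped with the metric restricted from $M_i$, as an integral current space; then $d_{\mathcal{F}}(M_i,U_i)\le \vol_m(M_i\setminus U_i)$ (take $Z=M_i$, the identity and inclusion embeddings, $B=\emptyset$ and $A=M_i\setminus U_i$ in \eqref{eqn-def-intrinsic-flat-1}--\eqref{eqn-def-intrinsic-flat-2}), bound $d_{\mathcal{F}}(U_1,U_2)$ by the two-strip construction of Theorem~\ref{embed-const}, and conclude by the triangle inequality for $d_{\mathcal{F}}$; the bookkeeping about which metric on $U_i$ the embedding constant refers to is where the remaining care is needed. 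Your instinct to keep $M_i\setminus U_i$ away from $N$ is reasonable, but attaching the complements isometrically to $Z$ demands a metric compatibility (no short bridges in $M_i\setminus U_i$ between parts of $\varphi_i(U_i)$ that are far apart in $N$) that the hypotheses do not supply; treating them purely as excess volume avoids ever having to verify it.
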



It should be noted that there are many other ways of estimating intrinsic
flat distances and indeed such methods are applied in papers
exploring the almost rigidity of the positive mass theorem in settings 
without rotational symmetry.

\section{\bf{Scalar Curvature and Mass in Asymptotically Hyperbolic Setting}}
\label{Sect-Positive}

This section is organized as follows. In the first subsection we briefly review the properties of spherically symmetric manifolds and  
the key formulas defining their mass.  In the next subsection we embed a manifold in this class into $\mathbb{H}^m\times \mathbb{R}$ as a graph and review the positive mass theorem and 
the monotonicity of the Hawking mass.  In the third
subsection we explore geometric implications of
having a small mass and prove key lemmas
which will be applied later for proving the stability of the
positive mass theorem.  We are deliberately imitating the methods
used by Lee and the second author in \cite{Lee-Sormani} to make this
easier to read for those who know that work.

\subsection{Understanding Mass in our Setting}\label{ss-setting}

We consider a spherically symmetric manifold, $(M^m,g)\in \RS_m$,
as described in Definition~\ref{def-rot-sym}. We can write its metric in
geodesic coordinates as 
\be \label{eqn-warp-metric}
g=ds^2 + f(s)^2 g_0
\ee
where 
$f:[0,\infty) \to [0,\infty)$
and where $g_0$ is the standard metric on the $(m-1)$-sphere.
Let $f_{min}=f(0)$.   

When $\partial M=\emptyset$ then $f(0)=f_{min}=0$ and 
$f(s)\ge f(0)$ by smoothness at the pole, $p_0$, where $s$ is
the distance from the pole.  When $\partial M \neq \emptyset$ the definition of
$\RS$ states that  $\partial M$ is a stable
minimal surface so $f(0)=f_{min}>0$ and $f'(0)=0$ and $f(s)\ge f(0)$
in that case as well.  In this case $s$ is the distance from $\partial M$. 

Let $\Sigma'_s$ be a level set of this distance function at a distance
$s$ from the pole or boundary.
We then have the following formulae for the ``area" and mean curvature  
of $\Sigma'_s$:
\be \label{A-s}
\mathrm{A}(s) =\vol_{m-1}(\Sigma'_s)=\omega_{m-1} f^{m-1}(s),
\ee
\be\label{H-s}
\mathrm{H}(s)  = \frac{(m-1)f'(s)}{f(s)}.
\ee
Thus $\Sigma'_s$ provide a constant mean curvature foliation of the manifold.   

The definition of $\RS$
also requires that $M^m$ has no interior minimal surfaces,
so by (\ref{H-s}), we have
\be
f'(s)\neq 0 \qquad \forall s\in (0,\infty].  
\ee
By the mean value theorem, we see that
\be\label{eqn-f'>0}
f'(s)>0 \qquad \forall s\in (0,\infty].  
\ee
Thus $A(s)$ is increasing and we can uniquely define our
rotationally symmetric constant mean curvature spheres
\be
\Sigma_{\alpha_0}=\Sigma'_{s_0} \textrm{ such that } 
\vol_{m-1}(\Sigma'_{s_0})=\alpha_0.
\ee

Recall that if $\Sigma$ is a connected surface of spherical topology in a three dimensional asymptotically hyperbolic manifold  then 
its Hawking mass may be defined as 
\be
\mathrm{m}_{\mathrm{H}}(\Sigma) = \frac{1}{2} \sqrt{\frac{A}{\omega_2}} 
\left(1 - \frac{1}{4\pi} \int_\Sigma \left(\left(\frac{H}{2}\right)^{2}-1\right) d\sigma \right),
\ee 
see e.g. \cite{Gibbons}.   Thus
\be
\mathrm{m}_{\mathrm{H}}(\Sigma'_s) =
 \frac{f(s)}{2}\left(1-(f'(s))^2 + f^2(s)\right).
 \ee

We extend this to define a higher dimensional Hawking mass function, 
$\mathrm{m}_{\mathrm{H}}(s)$, for $M^m\in \RS$:
\be \label{eqn-hawking-1}
\mathrm{m}_{\mathrm{H}}(s) = 
\left(\frac{f^{m-2}(s)}{2}\right)\left(1-(f'(s))^2 + f^2(s)\right). 
\ee
We obtain the monotonicity of the Hawking Mass a la Geroch,
\be \label{eqn-hawking-increases}
\mathrm{m}_{\mathrm{H}}'(s) = \left(\frac{f^{m-1}(s)f'(s)}{2(m-1)}\right) (\mathrm{R}+m(m-1))\ge 0
\ee
because $f'(s)>0$ for $s\in (0,\infty)$
and the scalar curvature at any point in $\Sigma'_s$ satisfies
\be \label{eqn-18}
\mathrm{R} =
\frac{(m-1)\left( (m-2)(1- (f'(s))^2) - 2f(s)f''(s)\right)} {f^2(s)}\geq -m(m-1).
\ee
Observe that when
$\partial M \neq \emptyset$, we have
\be \label{eqn-rmin-1}
\mathrm{m}_{\mathrm{H}}(0)=\tfrac{1}{2}(f_{min}^{m-2}+f^m_{min})>0.
\ee 
When $\partial M =\emptyset$, we have $\mathrm{m}_{\mathrm{H}}(0)=0$.

We define the mass of $M^m$ as the limit of the Hawking masses:
\be \label{eqn-limit}
\mathrm{m}_{\mathrm{AH}}(M^m) =\lim_{s\to \infty} \mathrm{m}_{\mathrm{H}}(s) \in [0,\infty].
\ee
Thus we have
\be \label{eqn-Hawking-inequality}
0\le\mathrm{m}_{\mathrm{H}}(0) \le \mathrm{m}_{\mathrm{H}}(s) \le \mathrm{m}_{\mathrm{AH}}.
\ee

\begin{rmrk}
A computation shows that for this limit to be finite it is necessary that 
\be
f(s) = \sinh s (1 + O(e^{-ms})).
\ee
 As a consequence of these asymptotics and spherical symmetry, one can check that the limit in \eqref{eqn-limit} coincides with the notion of mass by Wang \cite{Wang} and Chru\'sciel and Herzlich \cite{Chrusciel-Herzlich}. More specifically, this limit equals the first component of the mass vector (see e.g. \cite[Definition 3.1]{Herzlich}), whereas the remaining components vanish, so the Minkowskian length of the mass vector is precisely \eqref{eqn-limit}. In other words, in the setting of the current paper \eqref{eqn-limit} agrees with the standard definition of mass in arbitrary dimensions.   See also Remark 2.9 on the equivalence of these different definitions of mass in \cite{Nerz}.
\end{rmrk}

\subsection{Finding a Riemannian Isometric Embedding}
\label{ss-Riemannian}

Since our manifolds are spherically symmetric and asymptotically
hyperbolic we can embed them into $\mathbb{H}^m \times \mathbb{R}$ with the standard product metric as follows:
                                 
\begin{lem}\label{lem-graph}
Given $M^m \in \RS_m$, we can find a spherically
symmetric Riemannian isometric embedding of $M^m$
into $\mathbb{H}^m \times \mathbb{R}$
as the graph of some radial function $z=z(r)$ satisfying $z'(r)\geq 0$.  In graphical coordinates, we have 
\be \label{eqn-graph-g}
g=(1+[z'(r)]^2)dr^2 + \sinh^2 r \, g_0,
\ee
with $r\ge r_{min} = \arcsinh f_{min}$. 
Furthermore, we have the following formulae for
the scalar curvature, 
area, mean curvature, Hawking mass and its derivative
in terms of the radial coordinate $r$:
\begin{alignat}{1}
\mathrm{R} (r) 
&= \frac{m-1}{1+(z')^2}\left( -m + \frac{(m-2)(z')^2}{\sinh^2 r} + \frac{z' z'' \sinh 2r }{(1+(z')^2) \sinh^2 r}\right),\\
A (r)& =\omega_{m-1} \sinh^{m-1} r, \\
H (r)& = \frac{(m-1)\coth r}{\sqrt{1+(z')^2}},\\
\mathrm{m}_{\mathrm{H}} (r)& = \frac{\sinh^{m-2} r \cosh^2 r}{2}\left(\frac{(z')^2}{1+(z')^2}\right), \\
\mathrm{m}_{\mathrm{H}}' (r)& = \frac{\sinh^{m-1} r \cosh r}{2(m-1)}  \left(\mathrm{R} + m(m-1)\right).
\end{alignat}
The isometric embedding is unique up to a choice of $z_{min}=z(r_{min})$.
\end{lem}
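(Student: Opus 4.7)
\bigskip

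\noindent\textbf{Proof Proposal.} The plan is to find the embedding by matching the warped product metric \eqref{eqn-warp-metric} with the induced metric on a graph $z = z(r)$ inside $\mathbb{H}^m\times\mathbb{R}$, where $\mathbb{H}^m$ is written in geodesic polar coordinates so that its metric reads $dr^2 + \sinh^2 r\, g_0$. The induced metric on such a graph is $(1+(z'(r))^2)\,dr^2 + \sinh^2 r\, g_0$, and the equality of the spherical factors forces $\sinh r = f(s)$, i.e.\ $r = \arcsinh f(s)$. Since $f(s) \ge f_{min}$ for all $s$ and $f'(s)>0$ for $s\in(0,\infty)$ by \eqref{eqn-f'>0}, this change of variables is a smooth monotone diffeomorphism onto $[r_{min},\infty)$ with $r_{min} = \arcsinh f_{min}$.

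The radial part of the metric then yields the relation
\be
1 + (z'(r))^2 \;=\; \left(\frac{ds}{dr}\right)^2 \;=\; \frac{1+f^2(s)}{(f'(s))^2},
\ee
so that $(z'(r))^2 = \bigl(1+f^2-(f')^2\bigr)/(f')^2$. The key observation is that the numerator is nonnegative: from the definition \eqref{eqn-hawking-1} and the monotonicity \eqref{eqn-hawking-increases} together with the boundary information \eqref{eqn-rmin-1} we have $\mathrm{m}_\mathrm{H}(s)\ge 0$, which gives $1+f^2(s)-(f'(s))^2 \ge 0$. Hence $z'$ is real, and we may take the nonnegative square root, defining $z(r)$ by integration with free additive constant $z_{min} = z(r_{min})$. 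This choice is the only freedom in the construction, yielding the claimed uniqueness.

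Once $r$ and $z(r)$ are in hand, the five displayed formulas for $\mathrm{R}$, $A$, $H$, $\mathrm{m}_\mathrm{H}$ and $\mathrm{m}_\mathrm{H}'$ follow by direct computation. For $A$ and $H$ I would use \eqref{A-s}, \eqref{H-s} together with $f = \sinh r$ and $f' = (dr/ds)\cosh r = \cosh r/\sqrt{1+(z')^2}$. For $\mathrm{m}_\mathrm{H}$ I would substitute into \eqref{eqn-hawking-1} using $1-(f')^2 = 1 - \cosh^2 r/(1+(z')^2)$, which collapses neatly to $\sinh^{m-2}r \cosh^2 r\, (z')^2/[2(1+(z')^2)]$. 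The scalar curvature formula comes from substituting $f = \sinh r$ into \eqref{eqn-18}, rewriting $f''$ via the chain rule in terms of $z'$ and $z''$, and then $\mathrm{m}_\mathrm{H}'$ follows by combining this with \eqref{eqn-hawking-increases} after the change of variable $ds = \sqrt{1+(z')^2}\,dr$.

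The main obstacle I anticipate is the behavior at $r = r_{min}$. When $\partial M = \emptyset$ we have $f_{min}=0$ and the graph meets the axis smoothly; when $\partial M \neq \emptyset$ we have $f'(0)=0$ (stable minimal hypersurface), so $z'(r_{min})$ becomes infinite and the graph is vertical there. One has to check that this mild singularity is geometric rather than analytic: the profile $r\mapsto z(r)$ is still a proper $C^0$ graph, the induced metric remains smooth, and the isometric embedding is smooth in the intrinsic coordinate $s$. Aside from this care at the inner boundary, the remaining computations are routine algebraic manipulations.
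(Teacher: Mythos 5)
Your proposal is correct and follows essentially the same route as the paper: both proofs match the spherical factors to obtain $r=\arcsinh f(s)$, derive $(z')^2 = (1+f^2-(f')^2)/(f')^2$, invoke nonnegativity of the Hawking mass (together with $f'>0$) to ensure this is a genuine nonnegative quantity, and integrate to get $z(r)$ with the additive constant $z_{min}$ as the only freedom. Your concluding remark about the vertical tangent at $r_{min}$ when $\partial M\neq\emptyset$ is a correct and useful observation that the paper leaves implicit, and you are right that smoothness of the embedding is best seen in the arclength parameter $s$ rather than $r$.
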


\begin{proof}
Since Hawking mass is nonnegative (\ref{eqn-hawking-1}) and
there are no closed interior minimal surfaces (\ref{eqn-f'>0}), we have 
\be
0 < f'(s) \leq \sqrt{1 + f^2(s)}< \infty.
\ee   
We first define our radial function
\be
r(s)= \arcsinh (f(s)),
\ee
so $r_{min}= \arcsinh (f_{min})$ and
\be
\frac{dr}{ds}=\frac{f'(s)}{\sqrt{1 + f^2(s)}} \leq 1.
\ee 
Since $s$ is a distance function, we need to define $z(r)$ so that
\be
s'(r)=\sqrt{1+(z'(r))^2}.
\ee
This is solvable because $s'(r) \geq 1$.   We choose
\be
z'(r) = \sqrt{(s'(r))^2 -1} \ge 0,
\ee
and so we choose an arbitrary $z_{min}$ and obtain
\be
z(r)= \int_{r_{min}}^r \sqrt{(s'(r))^2 -1} \, dr + z_{min}.
\ee
The formulae for scalar curvature, 
area, mean curvature, Hawking mass and its derivative in $r$ follow from the corresponding equations in $s$ 
(see Section \ref{ss-setting}).
\end{proof}

\subsection{Reviewing Rigidity when the Mass is Zero}

We now review the proof of the Hyperbolic Positive Mass Theorem and 
Penrose Inequality in the spherically symmetric setting, as this helps 
convey what to expect when proving our main theorem.   It is the quantitative nature of the proof of these theorems in this setting that makes it possible to prove our conjecture.   There is no quantitative proof of the Hyperbolic
Positive Mass Theorem without such strong symmetry.  The key here is the
monotonicity of Hawking mass:

\begin{thm}\label{thm-monotonicity}
Given $M^m \in \RS_m$ isometrically
embedded into $\mathbb{H}^m \times \mathbb{R}$ as above,
we have
\be
\mathrm{m}_{\mathrm{H}}(r_{min})\le \mathrm{m}_{\mathrm{H}}(r)\le \mathrm{m}_{\mathrm{AH}}
\ee
and if there is an equality then $M^m$ is hyperbolic space (when $\mathrm{m}_{\mathrm{AH}}=0$) or a Riemannian anti-de Sitter-Schwarzschild manifold of mass $\mathrm{m}_{\mathrm{AH}}>0$,
\be \label{eqn-def-Sch}
g=\left(1+\frac{-2 \mathrm{m}_{\mathrm{AH}}}{2 \mathrm{m}_{\mathrm{AH}} -\sinh^{m-2}r \cosh^2 r}\right)dr^2 + \sinh^2 r\, g_0.
\ee
\end{thm}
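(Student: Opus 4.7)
The plan is to read off monotonicity directly from the Geroch-type identity already stated in Lemma~\ref{lem-graph}, integrate it to obtain the chain of inequalities, and then solve an ODE in the equality case.

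First, combining the formula
\be
\mathrm{m}_{\mathrm{H}}'(r) = \frac{\sinh^{m-1} r \cosh r}{2(m-1)}\left(\mathrm{R} + m(m-1)\right)
\ee
with the hypothesis $\mathrm{R} \ge -m(m-1)$ and the fact that $\sinh^{m-1} r \cosh r > 0$ for $r > 0$, we see immediately that $\mathrm{m}_{\mathrm{H}}'(r) \ge 0$ on $(r_{min},\infty)$. Integrating from $r_{min}$ to $r$ gives $\mathrm{m}_{\mathrm{H}}(r_{min}) \le \mathrm{m}_{\mathrm{H}}(r)$, and taking the limit $r \to \infty$ in the definition \eqref{eqn-limit} gives $\mathrm{m}_{\mathrm{H}}(r) \le \mathrm{m}_{\mathrm{AH}}$.

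Next I would address rigidity. Suppose $\mathrm{m}_{\mathrm{H}}(r_{min}) = \mathrm{m}_{\mathrm{AH}}$. Monotonicity forces $\mathrm{m}_{\mathrm{H}}$ to be constant on $[r_{min},\infty)$, say equal to $m_0 := \mathrm{m}_{\mathrm{AH}} \ge 0$. The vanishing of $\mathrm{m}_{\mathrm{H}}'$ combined with the formula above then forces $\mathrm{R} \equiv -m(m-1)$ throughout. Equivalently, I will just use the constancy of $\mathrm{m}_{\mathrm{H}}$ as a first-order ODE for the graph function $z(r)$ from Lemma~\ref{lem-graph}, namely
\be
\frac{\sinh^{m-2} r \cosh^2 r}{2}\cdot\frac{(z'(r))^2}{1+(z'(r))^2} \;=\; m_0.
\ee

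The final step is to solve this ODE and identify the resulting metric. If $m_0 = 0$ then $z'(r) \equiv 0$, so by \eqref{eqn-graph-g} the metric reduces to $dr^2 + \sinh^2 r\, g_0$, which is hyperbolic space (and $r_{min} = 0$ since $\mathrm{m}_{\mathrm{H}}(0) > 0$ whenever there is a minimal boundary by \eqref{eqn-rmin-1}, contradicting $m_0 = 0$). If $m_0 > 0$, solving for $1 + (z')^2$ yields
\be
1 + (z'(r))^2 \;=\; \frac{\sinh^{m-2} r \cosh^2 r}{\sinh^{m-2} r \cosh^2 r - 2m_0},
\ee
and substituting into \eqref{eqn-graph-g} gives exactly the anti-de Sitter-Schwarzschild form \eqref{eqn-def-Sch} of mass $m_0 = \mathrm{m}_{\mathrm{AH}}$. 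The uniqueness clause in Lemma~\ref{lem-graph} (up to $z_{min}$) ensures the isometry class is determined.

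The routine part is the integration of the monotonicity identity; the only mild subtlety is the equality analysis, where one should be careful that equality is posited at the two ends $r_{min}$ and $\infty$ (so monotonicity gives $\mathrm{m}_{\mathrm{H}} \equiv m_0$ on all of $[r_{min},\infty)$ rather than only on a subinterval) and that the resulting algebraic manipulation of the Hawking mass identity indeed matches the displayed Schwarzschild expression \eqref{eqn-def-Sch}. No deeper tools beyond the formulas of Lemma~\ref{lem-graph} and the scalar curvature hypothesis are needed.
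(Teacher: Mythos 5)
Your proposal is correct and follows essentially the same route as the paper: monotonicity is read off from the Geroch-type formula for $\mathrm{m}_{\mathrm{H}}'(r)$ together with $\mathrm{R}\ge -m(m-1)$, and the equality case is handled by solving the algebraic identity $\mathrm{m}_{\mathrm{H}}(r)=\tfrac{1}{2}\sinh^{m-2}r\cosh^2 r\cdot(z')^2/(1+(z')^2)\equiv \mathrm{m}_{\mathrm{AH}}$ for $(z')^2$, yielding hyperbolic space when $\mathrm{m}_{\mathrm{AH}}=0$ (with $r_{min}=0$) and the anti-de Sitter--Schwarzschild metric \eqref{eqn-def-Sch} when $\mathrm{m}_{\mathrm{AH}}>0$. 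The only cosmetic difference is that you justify $r_{min}=0$ via \eqref{eqn-rmin-1} rather than by observing that coordinate spheres in $\mathbb{H}^m$ are not minimal, but these are equivalent observations.
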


\begin{rmrk}\label{Kottler}
If we set $\rho = \sinh r$ in (\ref{eqn-def-Sch})
we recover the more standard form 
of the metric of a Riemannian anti-de Sitter-Schwarzschild manifold:
\be
g=\frac{d\rho^2}{1 + \rho^2 - 2\mathrm{m}_{\mathrm{AH}} \rho^{2-m}} + \rho^2 \,g_0.
\ee
\end{rmrk}

\begin{proof}
The monotonicity of the Hawking mass follows from 
(\ref{eqn-hawking-increases}).   When there is an equality
we apply Lemma~\ref{lem-graph} to see that
\be
\mathrm{m}_{\mathrm{AH}} = \frac{\sinh^{m-2}r \cosh^2 r}{2}\left(\frac{(z')^2}{1+(z')^2}\right). 
\ee
It follows that 
\be \label{eqn-Sch}
(z')^2=\frac{-2 \mathrm{m}_{\mathrm{AH}}}{2 \mathrm{m}_{\mathrm{AH}} -\sinh^{m-2}r \cosh^2 r}, 
\ee
which implies \eqref{eqn-def-Sch}.

When $\mathrm{m}_{\mathrm{AH}}=0$, we have $z'(r)=0$ so $z \equiv z_{min}$ and $M^m$ is the hyperbolic space. Observe that in this case $r_{min}$ must be $0$ because $r_{min}>0$
forces the existence of a minimal surface at the boundary, and coordinate spheres are not minimal in hyperbolic space.
\end{proof}

Note that in the above proof we not only proved that when the mass is 0 our manifold is isometric to Hyperbolic space, but in fact it embeds naturally with $z'(r)=0$.   When the mass is almost $0$ one might hope that we would have
$z'(r)$ close to $0$ but that is not true.  There can be a deep well or
horizon where $z'(r)$ is arbitrarily large for small values of $r$.   Examples of
this sort are constructed explicitly in \cite{Lee-Sormani} and could easily be
done in this setting as well.

\subsection{Preparing for Deep Apparent Horizons}

The following lemma will be useful for dealing with 
``deep apparent horizons'' 
that may occur in sequences satisfying the hypothesis of
Theorem~\ref{thm-main}:

\begin{lem}\label{lem-radial} 
When $\partial M \neq \emptyset$,
we have
\be
g=(1+[r'(z)]^2) dz^2 + \sinh^2 r(z) \, g_0,
\ee  
and the following formulae for scalar curvature,
area, mean curvature, Hawking mass, and derivative of the Hawking mass in terms of the height coordinate $z$ hold on $M$:
\begin{alignat}{1}
\mathrm{R}(z) &= -m(m-1) + \frac{m-1}{(1+(r')^2)\sinh^2 r}\left( m \cosh^2 r - 2 - \frac{r'' \sinh 2r}{1+(r')^2}\right), \\
A (z)& =\omega_{m-1} \sinh^{m-1} r, \\
H (z)& = \frac{(m-1)r' \coth r }{\sqrt{1+(r')^2}},\\
\mathrm{m}_{\mathrm{H}} (z)& = \frac{\sinh^{m-2} r \cosh^2 r}{2(1 + (r')^2 )}, \\
\mathrm{m}_{\mathrm{H}}' (z)& = \frac{r' \sinh^{m-1} r \cosh r}{2(m-1)} \left(\mathrm{R} + m(m-1)\right).
\end{alignat}
When $\partial M = \emptyset$ these formulae hold for $r\geq r_{disk}$ for some $r_{disk} \in [0,\infty)$, unless $(M,g)$ 
is hyperbolic space.
\end{lem}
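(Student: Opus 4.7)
The plan is to derive these formulas by inverting the graphical parametrization $z=z(r)$ of \lemref{lem-graph} and applying the chain rule. The point of switching from the horizontal coordinate $r$ to the vertical coordinate $z$ is that near an apparent horizon one has $z'(r)\to\infty$, so $r$ is a poor coordinate for the profile curve there; but $r'(z)\to 0$, so $z$ remains perfectly well behaved.

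The first step is to verify that the graph is invertible, i.e.\ that $z(r)$ is strictly increasing on the appropriate range. When $\partial M\neq\emptyset$, \eqref{eqn-rmin-1} gives $\mathrm{m}_{\mathrm{H}}(r_{min})>0$, so by the monotonicity \eqref{eqn-Hawking-inequality} we have $\mathrm{m}_{\mathrm{H}}(r)>0$ for all $r\geq r_{min}$. Combined with the formula $\mathrm{m}_{\mathrm{H}}(r)=\tfrac{1}{2}\sinh^{m-2}r\cosh^2 r\cdot\tfrac{(z')^2}{1+(z')^2}$ from \lemref{lem-graph}, this forces $z'(r)>0$ for $r>r_{min}$, so $r=r(z)$ is smooth for $z>z_{min}:=z(r_{min})$. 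When $\partial M=\emptyset$ and $(M,g)$ is not hyperbolic space, the embedding is not flat, so $z'(r_0)>0$ at some $r_0>0$; hence $\mathrm{m}_{\mathrm{H}}(r_0)>0$, and Hawking monotonicity propagates $z'(r)>0$ to all $r\geq r_0=:r_{disk}$.

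Next, I would compute the chain-rule identities
\be
r'(z)=\frac{1}{z'(r)},\qquad r''(z)=-\frac{z''(r)}{[z'(r)]^3},
\ee
the second obtained by differentiating the first with respect to $z$. Substituting $(z')^2=1/(r')^2$ and $z'z''=-r''/(r')^4$ into each formula of \lemref{lem-graph} produces the claimed expressions. The metric follows from $dr^2=(r')^2\,dz^2$ together with the identity $(1+1/(r')^2)(r')^2=1+(r')^2$. The area formula is unchanged because it depends only on $r=r(z)$; the Hawking mass reduces via $(z')^2/(1+(z')^2)=1/(1+(r')^2)$; the mean curvature follows from $1/\sqrt{1+(z')^2}=r'/\sqrt{1+(r')^2}$ (valid since $r'>0$); and the derivative of Hawking mass comes from $\mathrm{m}_{\mathrm{H}}'(z)=\mathrm{m}_{\mathrm{H}}'(r)\cdot r'(z)$. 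The scalar curvature formula is the longest substitution, but after collecting the terms arising from the three summands of the $r$-formula and rewriting $(m-1)(-m(r')^2)/(1+(r')^2)$ as $-m(m-1)+m(m-1)/(1+(r')^2)$, then absorbing the $m(m-1)/(1+(r')^2)$ piece into the factor containing $\cosh^2 r=1+\sinh^2 r$, one recovers the stated expression.

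The main obstacle is purely one of bookkeeping: ensuring that $z$ is genuinely a coordinate on the relevant subset, which in the boundaryless case requires isolating a threshold $r_{disk}$ using the Hawking mass monotonicity. Once invertibility is confirmed, the rest of the proof is a direct chain-rule computation applied to formulas already derived in \lemref{lem-graph}, with no new geometric input required.
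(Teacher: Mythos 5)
Your proposal is correct and follows essentially the same approach as the paper: use the positivity and monotonicity of the Hawking mass to ensure $z'(r)>0$ (so that $z$ is a valid coordinate on the relevant range), then invert the graphical parametrization of Lemma~\ref{lem-graph} via the chain rule. Your chain-rule identities $r'(z)=1/z'(r)$, $r''(z)=-z''(r)/[z'(r)]^3$ and the resulting substitutions are all verified, including the slightly involved bookkeeping in the scalar curvature formula.

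The only place where your presentation differs in spirit from the paper's is the boundaryless case. The paper defines $r_{disk}=\sup\{r:\,\mathrm{m}_{\mathrm{H}}(r)=0\}$, observes that the formulae hold for $r>r_{disk}$ when $r_{disk}<\infty$, and in the case $r_{disk}=\infty$ invokes $\mathrm{m}_{\mathrm{AH}}=0$ and Theorem~\ref{thm-monotonicity} to conclude that $M$ is hyperbolic space. You argue the contrapositive: if $(M,g)$ is not hyperbolic, then $z'$ is not identically zero, so pick any $r_0$ where $z'(r_0)>0$ and propagate positivity by monotonicity. This is logically equivalent (both rest on the same rigidity fact that $z'\equiv 0$ forces $M\cong\mathbb{H}^m$), though the paper's choice of $r_{disk}$ is the canonical threshold whereas yours is any point beyond it; this is immaterial since the lemma only asserts existence of \emph{some} such $r_{disk}$.
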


There is a similar lemma in \cite{Lee-Sormani} but here things are a little
more complicated as they are throughout.

\begin{proof}
It is clear from the proof of the Lemma~\ref{lem-graph} that $z'(r) > 0$ unless $\mathrm{m}_{\mathrm{H}}(r) = 0$. 
If $\partial M \neq \emptyset$ then by \eqref{eqn-rmin-1} and monotonicity of Hawking mass we have $\mathrm{m}_{\mathrm{H}}(r)>0$, and hence 
$z'(r) > 0$, for all $r \geq r_{min}$. Replacing $dr=(z'(r))^{-1} dz$ yields the stated formula for $g$, and the rest 
of the formulae follow from the respective equations of Lemma \ref{lem-graph}. 

Assume now that $\partial M =  \emptyset$. If $\mathrm{m}_{\mathrm{H}}(r)>0$ for $r>0$ the same argument applies, and the formulae hold on $M$. 
Otherwise, if the Hawking mass is not strictly positive for $r>0$, define $r_{disk}$ by 
\be
r_{disk}=\sup\{r: \, \mathrm{m}_{\mathrm{H}}(r) =0\} \in (0,\infty].
\ee
If $r_{disk}\neq \infty$ then all the equations will hold for $r > r_{disk}$. If $r_{disk} = \infty$, 
then by \eqref{eqn-limit} we have $\mathrm{m}_{\mathrm{AH}}=0$. Applying Theorem \ref{thm-monotonicity} we conclude
that $(M,g)$ is hyperbolic space.
\end{proof}

\subsection{Bounding the Diameter of the Boundary}
\label{ss-bounding}

In this subsection we use the mass $\mathrm{m}_{\mathrm{AH}}$ to bound the diameter of the boundary of the manifold:

\begin{lem} \label{lem-rmin}
If $M^m\in \RS$ then
\be
r_{min}\le r_\infty
\ee
where $r_\infty$ is the unique nonnegative root of the equation
\be
2 \mathrm{m}_{\mathrm{AH}} =  \sinh^{m-2}r_\infty \cosh^2 r_\infty .
\ee
So $\diam(\partial M^m) \le \pi \sinh r_\infty$.
\end{lem}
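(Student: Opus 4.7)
The plan is to combine the explicit formula for the Hawking mass at the minimal boundary with the monotonicity statement of Theorem~\ref{thm-monotonicity}, and then invert a strictly increasing function of $r$.

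First, I would compute $\mathrm{m}_{\mathrm{H}}$ at $r=r_{min}$ explicitly. Since $M^m\in\RS$ has a boundary which is a stable minimal hypersurface, the formula \eqref{eqn-rmin-1} gives
\be
\mathrm{m}_{\mathrm{H}}(r_{min})=\tfrac{1}{2}\bigl(f_{min}^{m-2}+f_{min}^{m}\bigr),
\ee
and since $f_{min}=\sinh r_{min}$ (because $r(s)=\arcsinh f(s)$ in Lemma~\ref{lem-graph}), this simplifies to
\be
\mathrm{m}_{\mathrm{H}}(r_{min})=\tfrac{1}{2}\sinh^{m-2}r_{min}\,\cosh^{2}r_{min}.
\ee
As a sanity check this is also consistent with the graphical formula for $\mathrm{m}_{\mathrm{H}}(r)$ in Lemma~\ref{lem-graph}: minimality of $\partial M$ forces $H=0$, so $z'(r_{min})=\infty$, hence the factor $(z')^{2}/(1+(z')^{2})\to 1$ at $r=r_{min}$.

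Next I would invoke the monotonicity inequality \eqref{eqn-Hawking-inequality} (equivalently Theorem~\ref{thm-monotonicity}) to obtain
\be
\tfrac{1}{2}\sinh^{m-2}r_{min}\,\cosh^{2}r_{min}=\mathrm{m}_{\mathrm{H}}(r_{min})\le\mathrm{m}_{\mathrm{AH}}.
\ee
Since the function $\phi(r)=\sinh^{m-2}r\,\cosh^{2}r$ is smooth, strictly increasing on $[0,\infty)$, and satisfies $\phi(0)=0$ and $\phi(r)\to\infty$ as $r\to\infty$, the equation $2\mathrm{m}_{\mathrm{AH}}=\phi(r_\infty)$ has a unique nonnegative root $r_\infty$, and the displayed inequality becomes $\phi(r_{min})\le\phi(r_\infty)$, so $r_{min}\le r_\infty$.

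Finally, the diameter bound is essentially a computation in the warped metric \eqref{eqn-warp-metric}. The boundary $\partial M=\Sigma'_0$ carries the induced metric $f_{min}^{2}g_0$, so it is isometric to a round $(m-1)$-sphere of radius $f_{min}=\sinh r_{min}$, whose diameter is $\pi\sinh r_{min}$. Combining with $r_{min}\le r_\infty$ and monotonicity of $\sinh$ yields $\diam(\partial M^m)\le\pi\sinh r_\infty$, as desired. I do not anticipate a serious obstacle: the only subtle point is justifying the value of $\mathrm{m}_{\mathrm{H}}(r_{min})$ at the minimal boundary (where the graphical parametrization becomes vertical), but \eqref{eqn-rmin-1} was established precisely to handle this case.
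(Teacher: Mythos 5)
Your argument follows the same strategy as the paper's: compute $\mathrm{m}_{\mathrm{H}}$ at the minimal boundary, invoke monotonicity $\mathrm{m}_{\mathrm{H}}(r_{min})\le\mathrm{m}_{\mathrm{AH}}$, and invert the strictly increasing function $\phi(r)=\sinh^{m-2}r\cosh^2 r$. The only stylistic difference is how you obtain $\mathrm{m}_{\mathrm{H}}(r_{min})=\tfrac12\sinh^{m-2}r_{min}\cosh^2r_{min}$: you read it off from \eqref{eqn-rmin-1} together with $f_{min}=\sinh r_{min}$, while the paper derives $r'(z_{min})=0$ from the minimality of the boundary via Lemma~\ref{lem-radial} and then plugs into the $z$-parametrized Hawking mass formula. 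These are equivalent and equally valid.

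One small omission: you open with \emph{``Since $M^m\in\RS$ has a boundary which is a stable minimal hypersurface,''} but Definition~\ref{def-rot-sym} also allows $\partial M=\emptyset$. In that case $f_{min}=0$, so $r_{min}=0$, the claimed inequality $r_{min}\le r_\infty$ is trivial (since $r_\infty\ge 0$), and the diameter bound is vacuous; the paper dispenses with this case in its opening sentence before assuming $r_{min}>0$. You should do the same before launching into the boundary computation. The rest, including the diameter observation that $\partial M$ is a round sphere of radius $\sinh r_{min}\le\sinh r_\infty$, is fine.
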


\begin{proof}
Note that when $r_{min}=0$ the inequality $r_{min}\le r_\infty$ holds trivially. Assuming $r_{min}>0$, we know by Lemma~\ref{lem-radial}
with $z_{min}=z(r_{min})$ that
\be
0 = \frac{(m-1)r'(z_{min})\coth r_{min}}{\sqrt{1+(r'(z_{min}))^2}}\\
\ee
because the boundary is a minimal surface. So
$r'(z_{min})=0$ and the Hawking mass is
\be
\mathrm{m}_{\mathrm{H}}(z_{min})  = \tfrac{1}{2} \sinh^{m-2}r_{min}\cosh^2 r_{min}.
\ee
By the monotonicity of Hawking mass (\ref{eqn-hawking-increases}) we have 
$\mathrm{m}_{\mathrm{H}}(z_{min}) \leq \mathrm{m}_{\mathrm{AH}}$, so if $r_\infty$ is defined as above then 
\be
\sinh^{m-2}r_{min} \cosh^2 r_{min} \leq  \sinh^{m-2}r_\infty \cosh ^m r_\infty,
\ee 
and $r_{min}\le r_\infty$ follows, since the function $h(r)=\sinh^{m-2}r \cosh^2 r$ is strictly increasing.
\end{proof}

\subsection{Lipschitz Control Away from the Center}

Here we see that if we avoid small values of $r$, where there might be a deep
well or a deep horizon, we obtain Lipschitz controls on $z=F(r)$
where we have embedded as in Lemma~\ref{lem-graph}.   These
controls will allow us to estimate the embedding constants of annular
regions in our embedded manifolds.

\begin{lem}\label{lem-F'}
If $z=F(r)$ then
\be
F'(r) \ge \sqrt{\frac{2m_1}{\sinh^{m-2}r \cosh^2 r - 2 m_1 }\,} \qquad \forall r > r_1
\ee
for any $r_1 \ge r_{min}$, where $m_1 = \mathrm{m}_{\mathrm{H}}(r_1)$, and
\be
F'(r) \le \sqrt{\frac{2\mathrm{m}_{\mathrm{AH}}}{\sinh^{m-2}r \cosh^2 r-2\mathrm{m}_{\mathrm{AH}}}\, } 
\qquad \forall r \ge \max \left\{ r_1, r_\infty \right\}
\ee
where $\mathrm{m}_{\mathrm{AH}}=\mathrm{m}_{\mathrm{AH}}(M^m)$, and $r_\infty$ is as in Lemma~\ref{lem-rmin}. 
\end{lem}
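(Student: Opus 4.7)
The plan is to derive both bounds by algebraically inverting the Hawking mass formula from Lemma \ref{lem-graph} to solve for $(F'(r))^2$ explicitly, and then applying the monotonicity of the Hawking mass in two directions.

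First I would solve for $(F'(r))^2$. Writing $z = F(r)$ and setting $P(r) = \sinh^{m-2}r\cosh^2 r$, the formula
\[
\mathrm{m}_{\mathrm{H}}(r) = \frac{P(r)}{2}\cdot\frac{(F'(r))^2}{1+(F'(r))^2}
\]
from Lemma \ref{lem-graph} can be rearranged (whenever the denominator is positive) as
\[
(F'(r))^2 = \frac{2\mathrm{m}_{\mathrm{H}}(r)}{P(r) - 2\mathrm{m}_{\mathrm{H}}(r)}.
\]
Since the map $t \mapsto \frac{2t}{P(r) - 2t}$ is strictly increasing on the interval where $P(r) > 2t$, both desired bounds will follow from sandwiching $\mathrm{m}_{\mathrm{H}}(r)$ between $m_1$ and $\mathrm{m}_{\mathrm{AH}}$ via monotonicity (Theorem \ref{thm-monotonicity}), provided the denominators remain positive.

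Next I would check the positivity of the denominators. For the lower bound, given $r_1 \geq r_{min}$, monotonicity yields $\mathrm{m}_{\mathrm{H}}(r) \ge m_1$ for $r \geq r_1$. Evaluating the Hawking mass formula at $r_1$ shows $2m_1 = P(r_1)\cdot\tfrac{(F'(r_1))^2}{1+(F'(r_1))^2} \leq P(r_1)$, and since $P$ is strictly increasing on $(0,\infty)$ we obtain $P(r) > 2m_1$ for all $r > r_1$. Thus the monotonicity of $t\mapsto 2t/(P(r)-2t)$ gives
\[
(F'(r))^2 = \frac{2\mathrm{m}_{\mathrm{H}}(r)}{P(r) - 2\mathrm{m}_{\mathrm{H}}(r)} \ge \frac{2m_1}{P(r) - 2m_1},
\]
which is the first inequality. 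For the upper bound, monotonicity gives $\mathrm{m}_{\mathrm{H}}(r) \leq \mathrm{m}_{\mathrm{AH}}$; the defining equation $2\mathrm{m}_{\mathrm{AH}} = P(r_\infty)$ together with strict monotonicity of $P$ gives $P(r) > 2\mathrm{m}_{\mathrm{AH}}$ for $r > r_\infty$, so the same monotonicity argument yields the second inequality on $r > \max\{r_1,r_\infty\}$, with the boundary case $r = r_\infty$ (where the right-hand side is formally $+\infty$) being trivial.

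There is no real obstacle here; the only thing to watch is the sign of $P(r) - 2\mathrm{m}_{\mathrm{H}}(r)$, which is precisely what the hypotheses $r > r_1$ and $r \geq \max\{r_1,r_\infty\}$ are there to guarantee via the bound $\mathrm{m}_{\mathrm{H}} \leq P/2$ and the definition of $r_\infty$. Once positivity is established, both bounds reduce to the observation that the bijection between $\mathrm{m}_{\mathrm{H}}(r)$ and $(F'(r))^2$ provided by the Hawking mass formula is monotone increasing in $\mathrm{m}_{\mathrm{H}}(r)$.
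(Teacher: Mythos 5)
Your argument is correct and follows essentially the same route as the paper: both start from the Hawking mass formula $\mathrm{m}_{\mathrm{H}}(r) = \tfrac{1}{2}\sinh^{m-2}r\cosh^2 r\cdot\tfrac{(z')^2}{1+(z')^2}$ of Lemma~\ref{lem-graph}, invoke the monotonicity $m_1 \le \mathrm{m}_{\mathrm{H}}(r) \le \mathrm{m}_{\mathrm{AH}}$, and then solve the resulting sandwich for $z'$. You simply make the algebraic inversion and the positivity of the denominators explicit, details the paper compresses into ``solving this inequality for $z'$.''
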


\begin{proof}
By the monotonicity of Hawking mass (\ref{eqn-hawking-increases})
for $r>r_1$ we have
\be
 \mathrm{m}_{\mathrm{H}}(r_1)\leq \mathrm{m}_{\mathrm{H}}(r) \leq  \mathrm{m}_{\mathrm{AH}}, 
\ee
hence
\be
m_1 \le  \frac{\sinh^{m-2}r \cosh^2 r}{2}
 \left(\frac{(z')^2}{1+(z')^2} \right) \leq  \mathrm{m}_{\mathrm{AH}}
\ee
by Lemma~\ref{lem-graph}. 
Solving this inequality for $z'$ yields
\be
z' \geq \sqrt{\frac{2m_1}{\sinh^{m-2}r \cosh^2 r-2m_1}} \qquad \forall r>r_1
\ee
and
\be
z' \leq \sqrt{\frac{2\mathrm{m}_{\mathrm{AH}}}{\sinh^{m-2}r \cosh^2 r-2\mathrm{m}_{\mathrm{AH}}}} \qquad \forall r\ge r_\infty.
\ee
\end{proof}


\section{\bf{Almost Rigidity of the Positive Mass Theorem}}
\label{Sect-Pos}

In this section we will prove Theorem~\ref{thm-main}
following a method very similar to that used in \cite{Lee-Sormani}, 
which can be briefly described as follows.
We wish to estimate the intrinsic flat distance between the
two tubular neighborhoods, $T_D(\Sigma_{\alpha_0})\subset M^m$
and $T_D(\Sigma_{\alpha_0})\subset {\mathbb{H}}^m$ by
constructing an explicit filling between them.
By Lemma ~\ref{lem-graph}, both $M^m$
and ${\mathbb{H}}^m$ have Riemannian isometric embeddings
into $N^{m+1}={\mathbb{H}}^{m}\times {\mathbb{R}}$.  In fact the
isometric embedding of ${\mathbb{H}}^m$ is distance preserving.
We will then
use Theorem~\ref{thm-embed-const} to construct
a filling space $Z$ from $N^{m+1}$ by adding a strip for $M^m$.  
This will allow to explicitly
estimate the intrinsic flat distance by adding up the appropriate volumes
of the strips and the regions between the embeddings in $N^{m+1}$.  

The difficulty arises when we note that we only have a small embedding
constant for annular regions in our manifold that avoid possible deep wells
and deep horizons.  Without a small embedding constant, the volumes
in the strips will be too large.  Thus we will cut off the deep wells at a carefully
chosen radius $r_\epsilon$, and only embed the annular regions beyond
$r_\epsilon$ into $N^{m+1}$.    We will then estimate the intrinsic
flat distance using Theorem~\ref{embed-const-2}. 

\subsection{Constructing Z}

We will use the same notation that was used by Lee and the second author in \cite{Lee-Sormani} to label
all the relevant regions that will be used in the construction of the
filling space.  See Figure~\ref{fig-SS-flat-filling}.  These are regions whose volumes will later need to be estimated
to compute the intrinsic flat distance.

Recall that given $\alpha>0$ we denote by  $\Sigma_{\alpha}$ the constant mean curvature sphere with $\vol_{m-1} \Sigma_{\alpha} = \alpha$ and we define its ``area radius'' $r(\Sigma_{\alpha})$ by
\be
r(\Sigma_{\alpha}) = \sinh^{-1}( (\alpha/\omega_{m-1})^{1/(m-1)} ).
\ee

Now fix $\alpha_0>0$ and set the following notations for the various key radii:
\begin{eqnarray}
r_{min}&=& \inf\{r(p): \, p\in M^m\},\\
r_{D-}&=&\inf \{ r(p): p\in T_D(\Sigma_{\alpha_0}) \subset M^m\},\\
r_0&=& r(\Sigma_{\alpha_0})= \sinh^{-1}( (\alpha_0/\omega_{m-1})^{1/(m-1)} ),\\
r_{D+}&=&\sup \{ r(p): p\in T_D(\Sigma_{\alpha_0}) \subset M^m\}.
\end{eqnarray}
Note that $r_{min}\le r_{D-}\le r_{D+}$ all depend on the
manifold while $r_0$ is uniquely determined by $\alpha_0$.
Since $r$ is a distance function, we have 
\be
r_0-D \le r_{D-}\le r_0 \le r_{D+} \le r_0+D \textrm{ and }0\le r_{min}
\ee
and the tubular neighborhood 
$T_D(\Sigma_{\alpha_0})\subset M^m$ embeds into
\be
r^{-1}(r_{D-}, r_{D+}) \subset {\mathbb{H}}^m \times {\mathbb{R}}.
\ee

\begin{figure}[h] 
   \centering
   \includegraphics[width=3in]{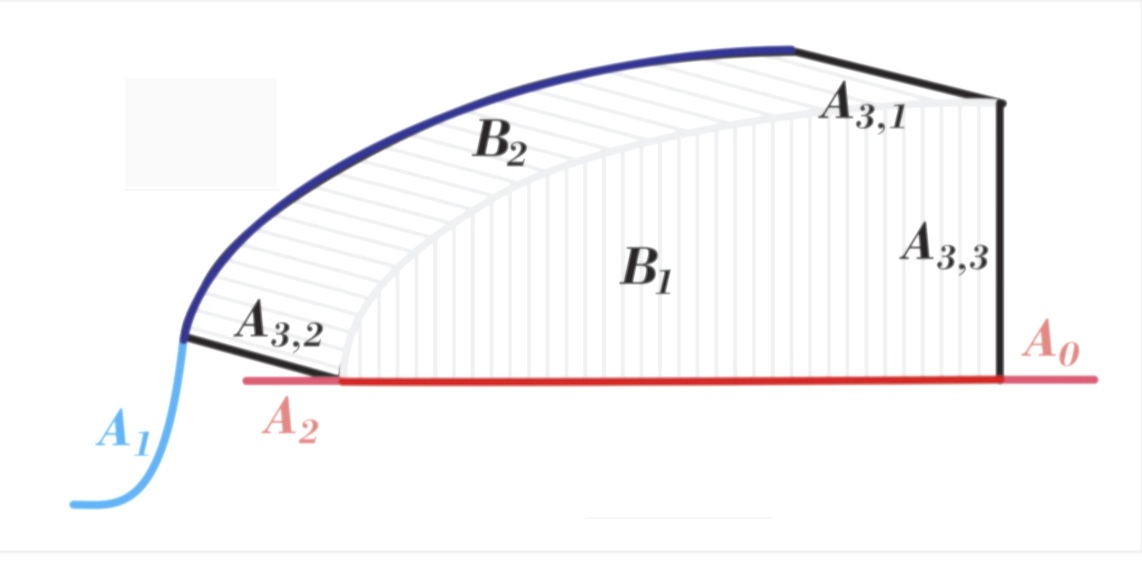} 
   \caption{
The horizontal line represents $T_D(\Sigma_{\alpha_0})\subset {\mathbb{H}}^m$ and the
curved line with a dip down on the left represents $T_D(\Sigma_{\alpha_0})\subset M^m$.  This is adapted from the corresponding figure in \cite{Lee-Sormani}.
      }
   \label{fig-SS-flat-filling}
\end{figure}

We will now explicitly define a filling between the two tubular neighborhood.  
All the regions we define starting
here are depicted in Figure~\ref{fig-SS-flat-filling}.
A part of our excess boundary will be formed by the region
\be
A_0=Ann_0(r_{D+}, r_0+D)\subset T_D(\Sigma_{\alpha_0})\subset {\mathbb{H}}^m
\ee
whose volume will
be estimated in Lemma~\ref{lem-switch-1}.  

In Lemma~\ref{lem-well} below,
we will choose a certain number $r_\epsilon'\in (0,r_0)$ and for 
\be \label{r-epsilon-prime}
r_\epsilon=\max\{r_\epsilon', r_{D-}\}
\ee
we cut off the
well 
\be\label{A-1}
A_1=r^{-1}(r_{D-},r_\epsilon)\subset T_D(\Sigma_{\alpha_0})\subset M^m.
\ee
Note that $A_1=\emptyset$ when
 $r_\epsilon'\le r_{D-}$ which occurs when our tubular neighborhood is not intersecting with a deep well.

In the case when $A_1$ is non-empty, we also cut off the corresponding annulus in hyperbolic space:
\be\label{A-2-1}
A_2=A_{2,1}=Ann_0(r_0-D,r_\epsilon) \subset T_D(\Sigma_{\alpha_0})\subset \mathbb{H}^m.
\ee
Note that when $r_0-D\le 0$, we have $A_2=B_0(r_\epsilon)$.
The volumes of $A_1$ and $A_2$ are uniformly controlled in Lemma~\ref{lem-well}.   

When $A_1$ is empty, we set
\be\label{A-2-2}
A_2=A_{2,2}=Ann_0(r_0-D,r_{D-}) \subset T_D(\Sigma_{\alpha_0})\subset 
{\mathbb{H}}^m.
\ee
The volume of $A_2$ is then bounded uniformly by Lemma~\ref{lem-switch-2}.

Recall that
Lemma~\ref{lem-graph} determines the embedding of \eqref{eqn-ann-M} up to
a vertical shift.  So we choose our Riemannian isometric embeddings of 
\begin{equation}\label{eqn-ann-M}
{r^{-1}(r_\epsilon, r_{D+})=
T_D(\Sigma_{\alpha_0})\setminus A_1
\subset M^m}
\end{equation}
and
\begin{equation}\label{eqn-ann-H}
{r^{-1}(r_\epsilon, r_{D+})= 
T_D(\Sigma_{\alpha_0})\setminus (A_0\cup A_2)
\subset {\mathbb{H}}^m}
\end{equation}
into 
$r^{-1}(r_\epsilon, r_{D+})\subset {\mathbb{H}}^m\times{\mathbb{R}}$
such that $\Sigma_{\alpha_\epsilon}\subset M^m$
and $\Sigma_{\alpha_\epsilon}\subset {\mathbb{H}}^m$ coincide. 
 
We define the region between the described isometric embeddings:
\be \label{B_1}
B_1=\{(x_1,\ldots,x_m,z): \,
z\in [0,F(r)],\, r\in (r_\epsilon, r_{D+})\}\subset
r^{-1}(r_\epsilon,r_{D+})\subset {\mathbb{H}}^{m}\times{\mathbb{R}}
\ee
where $F=F(r)$ is the embedding function.

Recall that the region $B_1$ is not a filling manifold: while the embedding of \eqref{eqn-ann-H} is distance preserving, the embedding of \eqref{eqn-ann-M} is not.
So we add a strip 
\be \label{B_2}
B_2=[0,S_M]\times r^{-1}[r_\epsilon, r_{D+}]\subset [0,S_M]\times M
\ee
of width $S_M$ determined by Lemma~\ref{lem-width}, then 
Theorem~\ref{thm-embed-const} 
gives us a distance preserving isometric embedding of \eqref{eqn-ann-M} and \eqref{eqn-ann-H}  
into $B_1 \cup B_2$.    

Applying Theorem~\ref{embed-const-2} we have the 
following proposition.  See Figure~\ref{fig-SS-flat-filling}.

\begin{prop} \label{main-estimates}
The intrinsic flat distance is bounded by the volumes:
\be \label{eqn-A_i-B_i}
d_{\mathcal{F}}(\,T_D(\Sigma_{\alpha_0})
\subset M^m\,
,\,T_D(\Sigma_{\alpha_0})\subset {\mathbb{H}}^m
\,)\le \vol_{m+1}(B)
+\vol_m(A)
\ee
where $B=B_1\cup B_2$ is the filling manifold
and 
\be
A=A_0+A_1+A_2+A_{3,1}+A_{3,2} + A_{3,3} 
\ee
is the excess boundary with
\be\label{A-3-1}
A_{3,1}=[0,S_M]\times r^{-1}\{r_{D+}\}\subset [0,S_M]\times M,
\ee
\be\label{A-3-2}
A_{3,2}=[0,S_M]\times r^{-1}\{r_\epsilon\}\subset [0,S_M]\times M,
\ee
\be\label{A-3-3}
A_{3,3}=r^{-1}\{r_{D+}\}\subset \partial B_1 \subset \mathbb{H}^{m}\times {\mathbb{R}}.
\ee
\end{prop}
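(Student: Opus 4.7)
The plan is to apply Theorem~\ref{embed-const-2} to the pair $M_1 = T_D(\Sigma_{\alpha_0}) \subset M^m$ and $M_2 = T_D(\Sigma_{\alpha_0}) \subset \mathbb{H}^m$, taking as submanifolds the annular regions $U_i = r^{-1}(r_\epsilon, r_{D+})$ which (as described in the setup above) admit Riemannian isometric embeddings $\varphi_i$ into $N^{m+1} = \mathbb{H}^m \times \mathbb{R}$: the first as the graph $z=F(r)$ given by Lemma~\ref{lem-graph}, and the second as the horizontal slice $z=0$. Since $\varphi_2$ is in fact distance preserving, its embedding constant vanishes and no strip needs to be attached on the $\mathbb{H}^m$ side, so $S_{U_2}=0$ in the statement of Theorem~\ref{embed-const-2}.

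I would first unpack the dissection: $M_1 \setminus U_1 = A_1$ and $M_2 \setminus U_2 = A_0 \cup A_2$ by the definitions \eqref{A-1} and \eqref{A-2-1}--\eqref{A-2-2}, so these pieces contribute $\vol(A_0) + \vol(A_1) + \vol(A_2)$ to the excess boundary term of Theorem~\ref{embed-const-2}. Then I would identify the filling region $W$ of that theorem with $B_1$ defined in \eqref{B_1}: the region in $N^{m+1}$ between $\varphi_1(U_1)$ and $\varphi_2(U_2)$. The strip $B_2$ of width $S_M$ defined in \eqref{B_2} arises directly from Theorem~\ref{thm-embed-const} applied to $\varphi_1$: its top $m$-dimensional face at $s=S_M$ gives the required distance preserving embedding of $U_1$, its $(m+1)$-dimensional volume equals $S_M \cdot \vol_m(U_1)$ (the $S_{U_1}\vol_m(U_1)$ contribution), and its two lateral pieces are exactly $A_{3,1}$ and $A_{3,2}$ from \eqref{A-3-1}--\eqref{A-3-2}, with total volume $S_M \cdot \vol_{m-1}(\partial U_1)$.

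The only genuinely non-routine step is computing $V = \partial W \setminus (\varphi_1(U_1) \cup \varphi_2(U_2))$, which a priori consists of two lateral walls of $B_1$, one at $r=r_\epsilon$ and one at $r=r_{D+}$. The outer wall at $r=r_{D+}$ is precisely $A_{3,3}$ from \eqref{A-3-3}. The hard part is to check that the inner wall at $r=r_\epsilon$ is degenerate, contributing zero volume. This is what the alignment choice in the setup is designed to achieve: by selecting $z_{min}$ so that $\Sigma_{\alpha_\epsilon} \subset M^m$ and $\Sigma_{\alpha_\epsilon} \subset \mathbb{H}^m$ coincide after embedding, we force $F(r_\epsilon) = 0$, so the top face $z=F(r)$ and the bottom face $z=0$ of $B_1$ meet along $\{r=r_\epsilon\}$ with no wall between them.

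Plugging these identifications into Theorem~\ref{embed-const-2} with $S_{U_2}=0$ yields $d_{\mathcal{F}}(M_1, M_2) \le S_M (\vol_m(U_1) + \vol_{m-1}(\partial U_1)) + \vol(B_1) + \vol(A_{3,3}) + \vol(A_0) + \vol(A_1) + \vol(A_2)$, which upon regrouping the strip contributions as $\vol(B_2) + \vol(A_{3,1}) + \vol(A_{3,2})$ matches \eqref{eqn-A_i-B_i}.
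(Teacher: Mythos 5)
Your proposal is correct and takes essentially the same approach as the paper, which simply states that the proposition follows by applying Theorem~\ref{embed-const-2} to the construction described in the preceding paragraphs. You have filled in the bookkeeping explicitly: identifying $U_i$, $W=B_1$, the decompositions $M_1\setminus U_1=A_1$, $M_2\setminus U_2=A_0\cup A_2$, $V=A_{3,3}$, recognizing $S_{U_2}=0$, and correctly observing that the alignment choice (forcing $F(r_\epsilon)=0$) makes the inner lateral wall of $B_1$ degenerate so it contributes no volume to $V$.
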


The proof of Theorem~\ref{thm-main} will be completed by estimating the volumes of these regions.

\subsection{Cutting off the Deep Wells}\label{ss-cut-well}

In this section we carefully determine where to cut off a possibly deep well.

\begin{lem} \label{lem-well}
Given $\epsilon>0$, $D>0$, $\alpha_0>0$ and 
$M^m\in \RS$, let $\Sigma_{\alpha_0}\in M^m$ be
a symmetric sphere of area $\alpha_0$.
Set 
\be 
\alpha_\epsilon= \min\left\{\epsilon/ (16D), 
1/4, \omega_{m-1}\sinh^{m-1}(\epsilon/4)
, \alpha_0\right\}
\ee
and choose 
\be
r'_\epsilon=r(\Sigma_{\alpha_\epsilon})
=  \sinh^{-1}( (\alpha_\epsilon/\omega_{m-1})^{1/(m-1)} )
\ee
so that
\be
\alpha_\epsilon=\omega_{m-1} \sinh^{m-1} r'_\epsilon.
\ee
If the sets $A_1$ and $A_{2,1}$ are defined by (\ref{A-1}) and
(\ref{A-2-1}) respectively, we have
\be
\vol_m(B_0(r_\epsilon')\subset {\mathbb{H}}^m), \,\vol_m(A_1), \,\vol_m(A_{2,1}) 
\le \epsilon/8.
\ee
\end{lem}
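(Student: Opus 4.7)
The plan is to decouple the three estimates: the bounds on $\vol_m(B_0(r'_\epsilon)\subset\mathbb{H}^m)$ and on $\vol_m(A_{2,1})$ are both purely hyperbolic and follow at once from a single computation, while the bound on $\vol_m(A_1)$ is the genuine geometric estimate and requires choosing the right parametrization. The four arguments of the minimum defining $\alpha_\epsilon$ correspond, in order, to: absorbing the $2D$-factor from the tubular width (for $A_1$), pairing with the hyperbolic ball volume, forcing $r'_\epsilon\le\epsilon/4$, and ensuring $r'_\epsilon\le r_0$ so the cut-off lies inside the tubular region.

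For the hyperbolic ball I would compute directly
\[
\vol_m(B_0(r'_\epsilon)) \;=\; \omega_{m-1}\int_0^{r'_\epsilon}\sinh^{m-1}(r)\,dr \;\le\; \omega_{m-1}\sinh^{m-1}(r'_\epsilon)\cdot r'_\epsilon \;=\; \alpha_\epsilon\,r'_\epsilon.
\]
The constraint $\alpha_\epsilon\le\omega_{m-1}\sinh^{m-1}(\epsilon/4)$ forces $r'_\epsilon\le\epsilon/4$, and pairing this with $\alpha_\epsilon\le 1/4$ gives $\alpha_\epsilon r'_\epsilon\le\epsilon/16\le\epsilon/8$. For $A_{2,1}$, the definition (\ref{A-2-1}) applies only when $A_1$ is non-empty, i.e.\ when $r_{D-}<r'_\epsilon$ and hence $r_\epsilon=r'_\epsilon$; then $A_{2,1}\subset B_0(r'_\epsilon)$ and the bound is immediate from the previous line.

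The main step is $\vol_m(A_1)$. The graphical embedding of Lemma~\ref{lem-graph} is of no help here because $F'(r)$ can be arbitrarily large inside a deep well, which is precisely why we are cutting the well off. Instead I would parametrize $A_1$ using the geodesic coordinate $s$ of (\ref{eqn-warp-metric}), in which
\[
\vol_m(A_1) \;=\; \omega_{m-1}\int_{s_{D-}}^{s_\epsilon} f^{m-1}(s)\,ds,
\]
with no slope factor. Two uniform estimates then control the integrand and the length of integration: on the one hand, since $s$ is a distance function and $A_1\subset T_D(\Sigma_{\alpha_0})$, the interval $[s_{D-},s_\epsilon]$ has length at most $2D$; on the other hand, monotonicity of $r(s)$ from $f'>0$ in (\ref{eqn-f'>0}) gives $f(s)\le\sinh r'_\epsilon=(\alpha_\epsilon/\omega_{m-1})^{1/(m-1)}$ throughout $A_1$, so $f^{m-1}(s)\le\alpha_\epsilon/\omega_{m-1}$. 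Multiplying yields $\vol_m(A_1)\le 2D\,\alpha_\epsilon$, and the choice $\alpha_\epsilon\le\epsilon/(16D)$ produces the required $\epsilon/8$. The only minor bookkeeping is the boundary case $\partial M\neq\emptyset$, where $s$ is distance from $\partial M$ rather than from a pole, but the interpretation of the tubular-width bound and the monotonicity of $r(s)$ are both unaffected, so no substantial obstacle is anticipated.
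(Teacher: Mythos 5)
Your proof is correct, and it takes a genuinely different route from the paper's on the one nontrivial estimate. The bounds on $\vol_m(B_0(r'_\epsilon))$ and $\vol_m(A_{2,1})$ coincide essentially verbatim with the paper. For $\vol_m(A_1)$, the paper works in the ambient $\mathbb{H}^m\times\mathbb{R}$: it constructs a cylinder $C^m=\partial B_0(r_\epsilon)\times[z_{D-},z_\epsilon]$ of volume $\le\alpha_\epsilon D$ and bounds the volume of the graph $A_1$ by projecting it radially outward onto $C^m$ and vertically downward onto $B_0(r_\epsilon)$, using $F'\ge 0$ from Lemma~\ref{lem-F'}, so that $\vol_m(A_1)\le\vol_m(C^m)+\vol_m(B_0(r_\epsilon))<\epsilon/8$. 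You instead compute entirely intrinsically in the warped product, writing $\vol_m(A_1)=\omega_{m-1}\int f^{m-1}(s)\,ds$ over an $s$-interval of length at most $2D$ (in fact it is at most $D$, since $r_\epsilon\le r_0$ forces the whole interval to lie on the inner side of $\Sigma_{\alpha_0}$, but $2D$ is enough), together with the pointwise bound $f^{m-1}(s)\le\alpha_\epsilon/\omega_{m-1}$ coming from monotonicity of $f$. The intrinsic computation is arguably cleaner: it avoids the geometric projection step, which requires one to justify that the combined cylinder-plus-disk comparison actually dominates the graph volume, and it never invokes the embedding or $F'\ge 0$ at all. What the paper's ambient argument buys is uniformity of style with the rest of the section, where all the other region estimates are phrased in terms of the graphical embedding in $\mathbb{H}^m\times\mathbb{R}$; your version buys transparency and slightly fewer moving parts. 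Both arrive at the same numerical constants modulo the harmless factor of $2$.
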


\begin{proof}  
We have 
\be \label{r1}
\vol_m(B_0(r_\epsilon')\subset {\mathbb{H}}^m) \le r_\epsilon' \alpha_\epsilon
=
\alpha_{\epsilon} \sinh^{-1}( (\alpha_\epsilon/\omega_{m-1})^{1/(m-1)} )
< (1/4)(\epsilon/4)
= \epsilon/16.
\ee

Note that $A_{2,1}$ and $A_1$ are empty unless $r_\epsilon=r_\epsilon'$, 
so for the rest of the proof we may assume that $r_\epsilon=r_\epsilon'$. Then 
\be
\vol_m(A_{2,1}\subset B_0(r_\epsilon) )<\epsilon/16
\ee 
is a direct consequence of \eqref{r1}.

In order to estimate $\vol_m A_1$, we let $z_\epsilon=z(\Sigma_{\alpha_\epsilon})$ and 
$z_{D-}=\min\{z(p): \, p\in T_D(\Sigma_{\alpha_0})\subset M^m\}$.
Observe that $z_\epsilon-z_{D-}<D$
because we chose $\alpha_\epsilon< \alpha_0$
and areas are monotone in $\RS_m$ and $r^{-1}(r_{D-}, r_{D+})$
is a tubular neighborhood of radius $D$ about $\Sigma_0$.
Observe that the cylinder
\be
C^m=\partial B_0(r_\epsilon) \times [z_{D-}, z_\epsilon]
\ee
has volume 
\be
\vol_m(C^m)\le \alpha_\epsilon(z_\epsilon-z_{D-})\le
\alpha_\epsilon D\le \epsilon/16.
\ee
By Lemma~\ref{lem-F'} we have  $F'(z)\ge 0$ so
we can project the well, 
$A_1\subset M^m$,
radially outwards to $C^m$ and vertically downwards to $B_{0}(r_\epsilon)$
to estimate the volume:
\be
\vol_m(A_1)\le \vol_m(C^m) +\vol_m(B_0(r_\epsilon)) <\epsilon/8.
\ee
\end{proof}

\subsection{Initial Restriction on $\delta$ for Theorem~\ref{thm-main}}

Recall that in Theorem~\ref{thm-main} we are given 
any $\epsilon>0$, $D>0$, $A_0>0$ and $m\in \N$,
and must choose $\delta=\delta(\epsilon, D, A_0, m)>0$
depending only on these parameters such that 
if $M^m\in\RS_m$ has mass, 
$\mathrm{m}_{\mathrm{AH}}(M)<\delta$ we will have our
estimate on the intrinsic flat distance.    Here we will
start choosing a number $\delta>0$ which will give us strong enough
control to bound $|F'|$ which will then 
bound the embedding constant and the width of the strip. 


\begin{lem} \label{lem-Q}
Given fixed $r_\epsilon>0$ 
and $m\in \N$, 
choose 
\be\label{eq-r-epsilon}
\delta< \delta(r_\epsilon)=\tfrac{1}{2} \sinh^{m-2} r_{\epsilon} \cosh^2 r_\epsilon.
\ee
If $M^m\in \RS_m$ has $\mathrm{m}_{\mathrm{AH}}<\delta$ 
then $M^m$ has an isometric embedding into
\be
\{z=F(r)\}\subset {\mathbb{H}}^{m}\times{\mathbb{R}}
\ee
where $F:[r_{min},\infty) \to \R$ is
an increasing function, and
\be \label{rmin-delta}
r_{min}\le r_\delta < r_\epsilon,
\ee
where $r_\delta$ is the unique positive root of the equation 
\be\label{eq-r-delta}
2\delta = \sinh^{m-2}r_\delta \cosh^2 r_\delta.
\ee
Furthermore, we have
\be\label{eq-Q}  
|F'(r)| \le Q(\delta,r_\epsilon)\qquad \forall r\ge 
r_{\epsilon},
\ee
where  
\be\label{defQ}
Q(\delta,r):=\sqrt{2\delta/(\sinh^{m-2} r\cosh^2 r-2\delta)}
\ee
is such that
\be
\lim_{\delta\to 0}Q(\delta,r_\epsilon)=0
\ee 
when $r_\epsilon$ is fixed.
\end{lem}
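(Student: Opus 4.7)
The plan is to assemble this lemma from the machinery already established in the previous subsections, since nearly every ingredient is available. First I would invoke Lemma~\ref{lem-graph} to produce the Riemannian isometric embedding of $M^m$ into $\mathbb{H}^m\times\mathbb{R}$ as a graph $z=F(r)$, where the lemma guarantees $F'(r)=z'(r)\ge 0$, so $F$ is increasing on $[r_{min},\infty)$.

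For the inequality $r_{min}\le r_\delta < r_\epsilon$, I would combine Lemma~\ref{lem-rmin} with the monotonicity of the function $h(r):=\sinh^{m-2}r\cosh^2 r$. Lemma~\ref{lem-rmin} gives $r_{min}\le r_\infty$ where $2\mathrm{m}_{\mathrm{AH}}=h(r_\infty)$. Since $\mathrm{m}_{\mathrm{AH}}<\delta$ and $h$ is strictly increasing on $[0,\infty)$, comparing with the defining equation $2\delta=h(r_\delta)$ yields $r_\infty<r_\delta$. The assumption $\delta<\frac{1}{2}h(r_\epsilon)$ similarly forces $r_\delta<r_\epsilon$ by the same monotonicity argument, establishing \eqref{rmin-delta}.

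For the derivative bound, I would apply the upper estimate in Lemma~\ref{lem-F'} with $r_1=r_\epsilon$. Since we have just shown $r_\epsilon>r_\delta>r_\infty$, the hypothesis $r\ge\max\{r_1,r_\infty\}$ reduces to $r\ge r_\epsilon$, and Lemma~\ref{lem-F'} yields
\begin{equation*}
F'(r)\le \sqrt{\frac{2\mathrm{m}_{\mathrm{AH}}}{h(r)-2\mathrm{m}_{\mathrm{AH}}}}\qquad \forall\,r\ge r_\epsilon.
\end{equation*}
Since $\mathrm{m}_{\mathrm{AH}}<\delta<\tfrac{1}{2}h(r_\epsilon)\le \tfrac{1}{2}h(r)$ for $r\ge r_\epsilon$, the map $x\mapsto 2x/(h(r)-2x)$ is increasing on $[0,\tfrac{1}{2}h(r))$, so replacing $\mathrm{m}_{\mathrm{AH}}$ by $\delta$ preserves the inequality and gives $F'(r)\le Q(\delta,r)$. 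The final step is to observe $Q(\delta,r)$ is decreasing in $r$ (again since $h$ is increasing), so $Q(\delta,r)\le Q(\delta,r_\epsilon)$ for $r\ge r_\epsilon$, which is \eqref{eq-Q}.

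The limit $\lim_{\delta\to 0}Q(\delta,r_\epsilon)=0$ is then immediate: with $r_\epsilon$ fixed, the denominator $h(r_\epsilon)-2\delta$ stays bounded away from zero while the numerator $2\delta\to 0$. There is no real obstacle in this proof, as it is essentially a careful bookkeeping exercise combining Lemmas~\ref{lem-graph},~\ref{lem-rmin}, and~\ref{lem-F'}; the only subtlety is tracking monotonicity of $h$ and of the auxiliary function $x\mapsto 2x/(h(r)-2x)$ to transfer bounds from $\mathrm{m}_{\mathrm{AH}}$ to the explicit parameter $\delta$.
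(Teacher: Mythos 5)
Your proposal is correct and follows essentially the same route as the paper's proof: invoke Lemma~\ref{lem-graph} for the graph embedding, combine Lemma~\ref{lem-rmin} with the strict monotonicity of $h(r)=\sinh^{m-2}r\cosh^2 r$ to obtain $r_{min}\le r_\infty<r_\delta<r_\epsilon$, apply the upper bound of Lemma~\ref{lem-F'}, pass from $\mathrm{m}_{\mathrm{AH}}$ to $\delta$ by monotonicity, and conclude via the fact that $Q(\delta,r)$ decreases in $r$. The only cosmetic difference is that the paper first establishes the bound $|F'(r)|\le Q(\delta,r)$ for $r\ge r_\delta$ and then restricts to $r\ge r_\epsilon$, while you choose $r_1=r_\epsilon$ in Lemma~\ref{lem-F'} to land on $r\ge r_\epsilon$ directly; both are valid and interchangeable.
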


\begin{proof}
Lemma~\ref{lem-graph} provides the Riemannian
isometric embedding. By Lemma~\ref{lem-rmin} we have $r_{min} \leq  r_\infty$, where $r_\infty$ satisfies
\[
\sinh^{m-2}r_\infty \cosh^2 r_\infty = 2 m_{\mathrm{AH}} < 2 \delta < 2\delta (r_\epsilon).
\]
This together with \eqref{eq-r-epsilon} and \eqref{eq-r-delta}
provides (\ref{rmin-delta}).
Lemma~\ref{lem-F'} and the fact that
$\mathrm{m}_{\mathrm{AH}}(M)< \delta$ then implies that
\be
|F'(r)| \le Q(\delta,r) := \sqrt{2\delta/(\sinh^{m-2}r\cosh^2 r - 2\delta)} 
\qquad \forall r\ge
r_\delta.
\ee
By our choice of $\delta$ we have $r_\epsilon > r_\delta$,
so we get (\ref{eq-Q}) by applying the fact that
$Q(\delta,r)$ decreases in $r$. 
\end{proof}

Note that the results of Section \ref{ss-cut-well} allow us to control all regions with $r<r_\epsilon$. Lemma \ref{lem-Q} will enable us to controll the remaining regions of interest
by taking $\delta$ small enough.

\subsection{Estimating the Regions in Hyperbolic space}

In this section we estimate the volumes of the regions 
$A_0$ and $A_{2}$ proving Lemma~\ref{lem-switch-1}
and Lemma~\ref{lem-switch-2}.  

\begin{lem}\label{lem-switch-1}
Given $D>0$, $\alpha_0>0$, $m\in \N$, we choose
$\delta>0$ as in Lemma~\ref{lem-Q}.
If $M^m\in \RS_m$ has $\mathrm{m}_{\mathrm{AH}}<\delta$ 
then
\be
\vol_m(A_0)\le D Q(\delta,r_0) \omega_{m-1}\sinh^{m-1}(r_0+D)
\ee
where $Q(\delta,r_0)$ is defined in (\ref{defQ}).
\end{lem}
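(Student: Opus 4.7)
The plan is to invoke the Riemannian isometric embedding $z=F(r)$ from Lemma~\ref{lem-Q}, identify $A_0$ as a geodesic annulus in $\mathbb{H}^m$, and then trade the ``slope" bound on $F$ for a bound on how much smaller $r_{D+}-r_0$ can be than $D$.

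First I would apply Lemma~\ref{lem-Q}, which applies because $\mathrm{m}_{\mathrm{AH}}<\delta$, to write $M^m$ as the graph $z=F(r)$ inside $\mathbb{H}^m\times \R$, with $|F'(r)|\le Q(\delta,r_\epsilon)$ for all $r\ge r_\epsilon$. I would then observe that since $\alpha_\epsilon\le \alpha_0$ and $r_{D-}\le r_0$ (because $\Sigma_{\alpha_0}\subset T_D(\Sigma_{\alpha_0})$), we have $r_\epsilon\le r_0$, so the bound applies on $[r_0,r_{D+}]$. Since $Q(\delta,r)$ is decreasing in $r$ it follows that $|F'(r)|\le Q(\delta,r_0)$ on this interval.

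Next I would relate the geodesic distance in $M^m$ to the radial coordinate. By the form $g=(1+(F'(r))^2)\,dr^2+\sinh^2 r\,g_0$ from Lemma~\ref{lem-graph}, the distance from $\Sigma_{\alpha_0}$ to a symmetric sphere of radius $r$ is $\int_{r_0}^{r}\sqrt{1+(F')^2}\,d\rho$. Since $r_{D+}$ is realized by a point on the outer edge of the tubular neighborhood we have
\be
D=\int_{r_0}^{r_{D+}}\sqrt{1+(F'(\rho))^2}\,d\rho \le \sqrt{1+Q(\delta,r_0)^2}\,(r_{D+}-r_0).
\ee
Combining with the elementary inequality $1-1/\sqrt{1+x^2}\le \sqrt{1+x^2}-1\le x$ for $x\ge 0$, applied with $x=Q(\delta,r_0)$, yields
\be
(r_0+D)-r_{D+}\;\le\; D\Bigl(1-\tfrac{1}{\sqrt{1+Q(\delta,r_0)^2}}\Bigr)\;\le\; D\,Q(\delta,r_0).
\ee

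Finally I would compute the volume of the hyperbolic annulus $A_0=\mathrm{Ann}_0(r_{D+},r_0+D)\subset \mathbb{H}^m$ using the standard formula and monotonicity of $\sinh^{m-1}$:
\be
\vol_m(A_0)=\int_{r_{D+}}^{r_0+D}\omega_{m-1}\sinh^{m-1}r\,dr\;\le\;\bigl((r_0+D)-r_{D+}\bigr)\,\omega_{m-1}\sinh^{m-1}(r_0+D),
\ee
and then substitute the bound from the previous step to obtain the claimed inequality. No step is a real obstacle: the only subtle point is checking that the range $[r_0,r_{D+}]$ lies inside the domain where Lemma~\ref{lem-Q} controls $F'$, which amounts to the observation $r_\epsilon\le r_0$ noted above.
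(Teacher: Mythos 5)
Your proof is essentially the same as the paper's: both invoke the graph embedding and the bound on $F'$ from Lemma~\ref{lem-Q}, use the arclength identity $D=\int_{r_0}^{r_{D+}}\sqrt{1+(F')^2}\,dr$ to bound $(r_0+D)-r_{D+}$ by $D\,Q(\delta,r_0)$, and then estimate the volume of the hyperbolic annulus by its width times $\omega_{m-1}\sinh^{m-1}(r_0+D)$. The only cosmetic difference is in the middle algebra: the paper uses $\sqrt{1+(F')^2}\le 1+|F'|$ directly, while you use $\sqrt{1+(F')^2}\le\sqrt{1+Q^2}$ followed by the elementary inequality $1-1/\sqrt{1+x^2}\le x$; both give the same conclusion.

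One small slip to fix: you write that Lemma~\ref{lem-Q} gives $|F'(r)|\le Q(\delta,r_\epsilon)$ for $r\ge r_\epsilon$, and then that ``since $Q(\delta,r)$ is decreasing in $r$ it follows that $|F'(r)|\le Q(\delta,r_0)$.'' That inference is backwards: because $r_0\ge r_\epsilon$ and $Q(\delta,\cdot)$ is decreasing, $Q(\delta,r_0)\le Q(\delta,r_\epsilon)$, so the bound you cite is the weaker one and does not imply the stronger $Q(\delta,r_0)$ bound. The correct justification (implicit in the proof of Lemma~\ref{lem-Q}, or obtained directly from Lemma~\ref{lem-F'} with $r_1=r_0$) is the pointwise estimate $|F'(r)|\le Q(\mathrm{m}_{\mathrm{AH}},r)\le Q(\delta,r)$ for $r\ge r_\delta$; then for $r\ge r_0$ monotonicity of $Q(\delta,\cdot)$ gives $Q(\delta,r)\le Q(\delta,r_0)$. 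With that correction the argument is sound and matches the paper's.
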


\begin{proof}  
By our choice of $\delta$ as in Lemma~\ref{lem-Q}, we have
\be
|F'(r)| \le Q(\delta, r_0) \qquad \forall r\ge r_0.
\ee
As a consequence, by the formula for arclength we obtain
\begin{eqnarray}
r_0+D-r_{D+}&=&r_0-r_{D+}+\int_{r_0}^{r_{D+}} \sqrt{1+F'(r)^2}\, dr\\
&\le & r_0-r_{D+}+(r_{D+}-r_0) (1+ Q(\delta,r_0))\\
&\le & (r_{D+}-r_0) Q(\delta,r_0)\le DQ(\delta,r_0).\\
\end{eqnarray}
Thus 
\begin{eqnarray}
\vol(A_0)&\le & \vol(r^{-1}(r_{D+}, r_0+D)\subset {\mathbb{H}}^m)\\
&\le & \left(r_{0}+D-r_{D+}\right) \omega_{m-1}\sinh^{m-1}(r_0+D)\\
&\le & D Q(\delta,r_0) \omega_{m-1}\sinh^{m-1}(r_0+D)
\end{eqnarray}
and the lemma follows.
\end{proof}

Recall that the region $A_{2,2}$ in (\ref{A-2-2}) is only
defined when $r'_\epsilon \le r_{D-}$, in which case $r_\epsilon = r_{D-}$.  This condition is assumed in the
following lemma estimating the volume of $A_{2,2}$.

\begin{lem}\label{lem-switch-2}   
Given $r_\epsilon>0$, $D>0$, $\alpha_0>0$, $m\in \N$,
we choose $\delta$ as in Lemma~\ref{lem-Q}.
If $M^m\in \RS_m$ has $\mathrm{m}_{\mathrm{AH}}<\delta$ then 
the region $A_{2}$ defined in (\ref{A-2-2}) satisfies
\be
\vol_m(A_{2}) \le D Q(\delta,r_\epsilon) \omega_{m-1}\sinh^{m-1}(r_0).
\ee
\end{lem}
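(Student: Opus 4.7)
The plan is to express $\vol_m(A_{2,2})$ as
\[
\omega_{m-1}\int_{\max\{0,r_0-D\}}^{r_{D-}}\sinh^{m-1} r\,dr,
\]
bound the integrand above by $\sinh^{m-1} r_0$ using monotonicity of $\sinh$, and then concentrate all the work on establishing that the radial thickness of $A_{2,2}$ satisfies $r_{D-}-(r_0-D)\le D\,Q(\delta,r_\epsilon)$.

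The key observation is that this thickness measures exactly the discrepancy between the true manifold distance from $\Sigma_{\alpha_0}$ to $\Sigma_{r_{D-}}$ inside $M^m$ (which should equal $D$) and the corresponding coordinate distance $r_0-r_{D-}$ in hyperbolic space. I first need to confirm that the manifold distance really is $D$, i.e., that the inner edge of $T_D(\Sigma_{\alpha_0})\subset M^m$ does not bump into $\partial M^m$. Since $A_{2,2}$ is only defined in the regime $r_\epsilon=r_{D-}$, and since Lemma~\ref{lem-Q} together with Lemma~\ref{lem-rmin} gives $r_{min}\le r_\delta<r_\epsilon=r_{D-}$ under the hypothesis $\mathrm{m}_{\mathrm{AH}}<\delta<\delta(r_\epsilon)$, the inner edge of the tubular neighborhood lies strictly interior to $M^m$, and the graph embedding $z=F(r)$ from Lemma~\ref{lem-graph} yields the honest arclength identity
\[
D=\int_{r_{D-}}^{r_0}\sqrt{1+F'(r)^2}\,dr.
\]

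Next I would use the elementary inequality $\sqrt{1+x^2}-1\le|x|$ together with the pointwise bound $|F'(r)|\le Q(\delta,r_\epsilon)$ on $[r_\epsilon,r_0]$, which follows from Lemma~\ref{lem-Q} and monotonicity of $Q$ in its second argument, to obtain
\[
D-(r_0-r_{D-})=\int_{r_{D-}}^{r_0}\bigl(\sqrt{1+F'(r)^2}-1\bigr)\,dr\le (r_0-r_{D-})\,Q(\delta,r_\epsilon)\le D\,Q(\delta,r_\epsilon).
\]
Rearranging delivers the desired thickness bound $r_{D-}-(r_0-D)\le DQ(\delta,r_\epsilon)$; the ball case $r_0-D<0$ follows from the same inequality by replacing $r_0-D$ with $0$. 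Combining with the volume expression above completes the proof.

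I do not foresee a major technical obstacle, as the analytic machinery is already installed in the preceding lemmas. The single subtle point is ensuring that the tubular neighborhood does not touch $\partial M^m$, so that the arclength relation is an equality and not merely an inequality. This is precisely what the quantitative choice of $\delta=\delta(r_\epsilon)$ in Lemma~\ref{lem-Q} is designed to guarantee via the mass--radius comparison of Lemma~\ref{lem-rmin}.
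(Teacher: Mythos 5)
Your proof is correct and follows essentially the same route as the paper: both express the thickness $r_{D-}-(r_0-D)$ via the arclength identity $D=\int_{r_{D-}}^{r_0}\sqrt{1+F'^2}\,dr$, bound $|F'|\le Q(\delta,r_\epsilon)$ using Lemma~\ref{lem-F'}/Lemma~\ref{lem-Q}, apply $\sqrt{1+x^2}\le 1+|x|$, and then multiply the resulting radial thickness by $\omega_{m-1}\sinh^{m-1}$ of the outer radius. The one place where you are more explicit than the paper is in verifying that the arclength relation is an honest equality (i.e., that the inner edge of $T_D(\Sigma_{\alpha_0})$ does not collide with $\partial M$), which you deduce from $r_{\min}\le r_\delta<r_\epsilon=r_{D-}$; the paper uses the equality silently, though it is justified for the same reason. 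Your brief treatment of the $r_0-D<0$ ball case is also fine and consistent with the paper's more generous bound.
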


\begin{proof}
By our choice of $\delta$ we know that
\be
|F'(r)| \le Q(\delta, r_\epsilon) \qquad \forall r\ge r_{D-}= r_\epsilon,
\ee
so by the formula for arclength we have
\begin{eqnarray}
r_{D-}-(r_0-D)&=&r_{D-}-r_{0}+\int_{r_{D-}}^{r_{0}} \sqrt{1+F'(r)^2}\, dr\\
&\le & r_{D-}-r_0+(r_{0}-r_{D-}) (1+ Q(\delta,r_\epsilon))\\
&\le & (r_{0}-r_{D-}) Q(\delta,r_\epsilon)\le DQ(\delta,r_\epsilon).
\end{eqnarray}
Thus 
\begin{eqnarray}
\vol(A_{2})&= & \vol(r^{-1}(r_0-D, r_{D-})\subset {\mathbb{H}}^m)\\
&\le & \left(r_{D-}-(r_0-D)\right) \omega_{m-1}\sinh^{m-1}(r_{D-}) \\
&\le & D Q(\delta,r_\epsilon) \omega_{m-1}\sinh^{m-1}(r_0).
\end{eqnarray}
\end{proof}

\subsection{Controlling the Width of the Strip}
\label{ss-choose}

\begin{lem}\label{lem-width}
Given $r_\epsilon>0$, $D>0$, $\alpha_0>0$, $m\in \N$,
we choose $\delta$ as in Lemma~\ref{lem-Q}.
If $M^m\in \RS_m$ and
$\mathrm{m}_{\mathrm{AH}}(M^m) < \delta$,
then the region $r^{-1}(r_\epsilon,r_{D+})$
isometrically embeds into the filling
manifold $B_1\cup B_2$ of (\ref{B_1})
and (\ref{B_2})  
where
\be\label{strip-1}
S_M=S(\delta,r_\epsilon, D, r_0)
= \sqrt{C (2D +\pi \sinh r_0 + C)}
\ee
with
\be \label{emb-1}
C=C(\delta,r_\epsilon, D, r_0)= (4D+2\pi \sinh r_0) Q(\delta, r_\epsilon),
\ee
where $Q(\delta, r_\epsilon)$ is defined as in Lemma~\ref{lem-Q}.
\end{lem}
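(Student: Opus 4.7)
The plan is to apply Theorem~\ref{thm-embed-const} to the graphical Riemannian isometric embedding of the annular region $r^{-1}(r_\epsilon, r_{D+}) \subset M^m$ supplied by Lemma~\ref{lem-graph}. That lemma realizes this region as the graph $z = F(r)$ over the hyperbolic annulus $W = r^{-1}(r_\epsilon, r_{D+}) \subset \mathbb{H}^m$ sitting inside $\mathbb{H}^m \times \mathbb{R}$. Because $\mathrm{m}_{\mathrm{AH}}(M) < \delta$ with $\delta$ chosen as in Lemma~\ref{lem-Q}, that lemma furnishes the uniform gradient bound
\[
\sup_{r \in [r_\epsilon,\, r_{D+}]} |F'(r)| \le Q(\delta, r_\epsilon),
\]
and since $F$ depends only on $r$ we have $|\grad F| = |F'(r)|$ everywhere on $W$.

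The second step is to estimate the diameter of $W$ so as to apply Theorem~\ref{thm-Z}. Since $r_\epsilon \ge r_{D-} \ge r_0 - D$ and $r_{D+} \le r_0 + D$, every point of $W$ lies at hyperbolic distance at most $D$ from the sphere $\Sigma_{\alpha_0} \subset \mathbb{H}^m$, and that sphere, equipped with the induced metric $\sinh^2 r_0\, g_0$, has diameter $\pi \sinh r_0$. Concatenating a radial segment, a geodesic along $\Sigma_{\alpha_0}$, and a second radial segment therefore yields $\diam(W) \le 2D + \pi \sinh r_0$. Theorem~\ref{thm-Z} then produces
\[
C \le 2 \diam(W) \cdot \sup_W |\grad F| \le (4D + 2\pi \sinh r_0)\, Q(\delta, r_\epsilon),
\]
which is the desired bound (\ref{emb-1}).

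Finally, I would invoke Theorem~\ref{thm-embed-const} with the manifold being the annular region $r^{-1}(r_\epsilon, r_{D+}) \subset M^m$. The same path-construction argument, carried out intrinsically inside $T_D(\Sigma_{\alpha_0}) \subset M^m$ using the warped product metric (\ref{eqn-warp-metric}) and the fact that $\Sigma_{\alpha_0}$ has induced metric $\sinh^2 r_0\, g_0$, bounds the intrinsic diameter of this region by $2D + \pi \sinh r_0$ as well. Substituting into $S_M = \sqrt{C_M(\diam(M) + C_M)}$ from Theorem~\ref{thm-embed-const} then gives (\ref{strip-1}), and the resulting distance preserving embedding lands in the hyperbolic filling $B_1$ unioned with the attached strip $B_2$. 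The main subtlety is not geometric but bookkeeping: verifying that the condition on $\delta$ from Lemma~\ref{lem-Q} is strong enough to keep $F'$ controlled all the way up to $r_{D+}$ and that the monotonicity of $Q(\delta, \cdot)$ in its second argument lets us replace $Q(\delta, r)$ by the uniform value $Q(\delta, r_\epsilon)$. No essentially new estimates beyond those already established in Lemmas~\ref{lem-graph}, \ref{lem-F'}, and \ref{lem-Q} are required.
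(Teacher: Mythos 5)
Your proposal is correct and follows essentially the same route as the paper: bound $|F'|$ by $Q(\delta,r_\epsilon)$ via Lemma~\ref{lem-Q}, bound $\diam(W)$ and the intrinsic diameter of $r^{-1}(r_\epsilon,r_{D+})\subset M^m$ by $2D+\pi\sinh r_0$ using radial segments together with a great-circle arc on $\Sigma_{\alpha_0}$, then apply Theorem~\ref{thm-Z} to get $C_M\le C$ and Theorem~\ref{thm-embed-const} to get $S_M$. The only cosmetic difference is that the paper bounds $\diam(W)$ simply by $\diam(r^{-1}(r_\epsilon,r_{D+})\subset M^m)$, since vertical projection is length non-increasing, rather than estimating the two diameters independently.
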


\begin{proof}
We begin by applying Theorem~\ref{thm-embed-const} to
the isometric embedding of
\be
r^{-1}(r_\epsilon, r_{D+})\subset M^m
\ee
into
\be
N^{m+1}=W \times \R \subset {\mathbb{H}}^{m}\times \R
\ee
where $W=Ann_0(r_\epsilon, r_{D+})\subset {\mathbb{H}}^m$.

Since $r^{-1}(r_\epsilon, r_{D+})\subset T_D(\Sigma_{\alpha_0})$,
we have
\be
\diam(W)\le\diam(r^{-1}(r_\epsilon, r_{D+})) \le 2D + \diam(\Sigma_{\alpha_0})=2D+\pi \sinh r_0.
\ee
By Lemma~\ref{lem-Q} we have a bound, $|F'(r)|\le Q(\delta, r_\epsilon)$, for any $r\geq r_\epsilon$. Then 
Theorem~\ref{thm-Z} gives us the 
embedding constant $C_M\le C$
as in (\ref{emb-1}).  By Theorem~\ref{thm-embed-const}, the strip must have width
$S_M$ as in (\ref{strip-1}).
\end{proof}

\subsection{Volume Estimates and the Proof of Theorem~\ref{thm-main}}
\label{ss-vol}

In this section we complete the proof of Theorem~\ref{thm-main}
by estimating the volumes of the filling manifold $B_1\cup B_2$ and the excess boundary $A$ and applying (\ref{eqn-A_i-B_i}) as required in Proposition~\ref{main-estimates}.

\begin{proof}
Given any $\epsilon>0$, $D>0$, $\alpha_0>0$, $m\in \N$  we set
$r_0>0$ so that 
\be
\alpha_0=\omega_{m-1} \sinh^{m-1} r_0
\ee 
and choose 
\be
r_\epsilon'=r_\epsilon'(\epsilon, D, \alpha_0, m)>0
\ee 
exactly as in Lemma~\ref{lem-well}. Further, we define 
\be
r_\epsilon=\max\{r_\epsilon', r_{D-}\}
\ee
as in (\ref{r-epsilon-prime})  and choose 
\be\label{choose-delta-1}
\delta<\delta(r_\epsilon)
\ee 
as in Lemma~\ref{lem-Q}. This will be subsequently refined to obtain the value of $\delta$ which suffices for Theorem \ref{thm-main} to hold. From now on it will be assumed that $M^m\in\RS_m$ has mass 
$\mathrm{m}_{\mathrm{AH}}<\delta$.

When $r'_\epsilon \geq r_{D-}$ we apply Lemma~\ref{lem-well},
(\ref{A-1}) and
(\ref{A-2-1}) to see that 
\be\label{201}
\vol_m(A_1)+\vol_m(A_2)\le \epsilon/8+\epsilon/8=\epsilon/4.
\ee
If $r'_\epsilon< r_{D-}$ then
$A_1=\emptyset$ and by Lemma~\ref{lem-switch-2}, we
obtain the same estimate as long as
$\delta>0$ is chosen small enough so that
\be\label{choose-delta-2}
D Q(\delta, r_\epsilon)
\omega_{m-1} \sinh^{m-1} (r_0 + D) < \epsilon/8
\ee
holds. This second restriction on $\delta$ also suffices to
obtain
\be\label{203}
\vol_m(A_0)<\epsilon/8,
\ee
see Lemma \ref{lem-switch-1}.

By Lemma~\ref{lem-graph} we have an
isometric embedding of $r^{-1}(r_\epsilon,r_{D+})$
into the graph, $\{z=F(r)\}\subset {\mathbb{H}}^{m}\times{\mathbb{R}}$, and may
define $B_1$ as in (\ref{B_1}).  We then have
\begin{eqnarray}
\vol_{m+1}(B_1)
&=& \int_{r_\epsilon}^{r_{D+}} (F(r)-F(r_\epsilon)) \omega_{m-1} \sinh^{m-1} r \, dr\\
&\le & (r_{D+}-r_\epsilon) \omega_{m-1} \sinh^{m-1} r_{D+}
(F(r_{D+})-F(r_\epsilon))\\
&\le & 2D \omega_{m-1} \sinh^{m-1} (r_0+D)
\int_{r_\epsilon}^{r_{D+}} F'(r)\, dr < \epsilon/8
\end{eqnarray}
as long as $\delta$ is chosen small enough so that
\be \label{choose-delta-3}
4D^2 \omega_{m-1}\sinh^{m-1} (r_0+D)Q(r_\epsilon, \delta) < \epsilon/8
\ee
holds. 

Next we apply Lemma~\ref{lem-width} to
create region $B_2$ as in (\ref{B_2}). Again, by choosing $\delta$ to be sufficiently small, we can ensure that
\begin{eqnarray} 
\quad \quad\vol_{m+1}(B_2)
&=&  S_M \vol(r^{-1}(r_\epsilon, r_{D+})\subset M^m) \\
&=& 
S_M \int_{r_\epsilon}^{r_{D+}} \sqrt{1+F'(r)^2}\, \omega_{m-1}\sinh^{m-1} r \, dr\\
&=& 
S_M \int_{r_\epsilon}^{r_{D+}} \left(1+F'(r)\right)\, \omega_{m-1}\sinh^{m-1} r \, dr\\
&\leq&  S(\delta,r_\epsilon, D, r_0) 2D \omega_{m-1}
\sinh^{m-1} (r_0+D) (1+Q(\delta, r_\epsilon))<\epsilon/8
\end{eqnarray}
 since 
\be 
\lim_{\delta \to 0} S(\delta,r_\epsilon, D, r_0) = 0
\ee
by  \eqref{strip-1}, \eqref{emb-1}, and \eqref{defQ}. 

Further, by (\ref{A-3-1}) and (\ref{A-3-2})  we have
\begin{eqnarray}
\quad \quad  \vol_m(A_{3,1})&=& S_M \,\omega_{m-1}\sinh^{m-1} r_{D+}
\,\,\,\le\,\,\, S_M
\omega_{m-1}\sinh^{m-1}(r_0+D) < \epsilon/12\\
\vol_m(A_{3,2})&=& S_M \,\omega_{m-1}\sinh^{m-1} r_\epsilon \le S_M \omega_{m-1}\sinh^{m-1}r_0 <\epsilon/12
\end{eqnarray}
as long as $\delta$ is chosen small enough so that
\be \label{choose-delta-5}
S(\delta, r_\epsilon, D, r_0)\omega_{m-1}\sinh^{m-1}(r_0+D)< \epsilon/12
\ee
holds. Finally, by (\ref{A-3-3}) and the mean value theorem we have
\begin{eqnarray}
\quad \vol_m(A_{3,3})&=& \omega_{m-1}\sinh^{m-1}(r_{D+}) (F(r_{D+})-F(r_\epsilon))\\
&\le& 2D \omega_{m-1}\sinh^{m-1}(r_0+D) Q(\delta, r_\epsilon)
< \epsilon/12
\end{eqnarray}
as long as $\delta$ is chosen small enough so that
\be \label{choose-delta-6}
\quad 2 D \omega_{m-1}\sinh^{m-1}(r_0+D) Q(\delta, r_\epsilon)
< \epsilon/12.
\ee
The estimate on the intrinsic flat distance then follows from (\ref{eqn-A_i-B_i}),
adding all the estimated volumes.

Applying (\ref{201})-(\ref{203}) again,
we obtain the volume estimate because
\[
\begin{split}
&\vol(\,T_D(\Sigma_0)\subset M^m\,)\\
& \qquad =\,\,\,\vol(A_1) \,\,+ \,\,\vol(\,r^{-1}(r_\epsilon, r_{D+})\subset M^m\,)\\
&\qquad =\,\,\,\vol(A_1) \,\,+ \int_{r_\epsilon}^{r_{D+}} \sqrt{1+F'(r)^2\,} \omega_{m-1}
\sinh^{m-1} r \,dr\\
&\qquad \leq \,\,\,\vol(A_1) \,\, + \sqrt{1+Q(\delta, r_\epsilon)^2\,}
\int_{r_\epsilon}^{r_{D+}} \omega_{m-1}\sinh^{m-1}r \,dr\\
&\qquad =\,\,\,\vol(A_1) \,\, + \sqrt{1+Q(\delta, r_\epsilon)^2\,}\vol(r^{-1}(r_\epsilon, r_{D+})\subset {\mathbb{H}}^m)\\
&\qquad =\,\,\,\vol(A_1) \,\, + \sqrt{1+Q(\delta, r_\epsilon)^2\,}
\left(\vol(T_D(\Sigma_{\alpha_0})\subset {\mathbb{H}}^m)-\vol(A_0)-\vol(A_2) \right)
\end{split}
\]
and the fact that $Q(\delta, r_\epsilon)$ can be taken small as $\delta\to 0$.
\end{proof}

\bibliographystyle{plain}
\bibliography{2017}

\end{document}